\theoremstyle{plain}
\newtheorem{theorem}{Theorem}[section]
\newtheorem{proposition}[theorem]{Proposition}
\newtheorem{corollary}[theorem]{Corollary}
\newtheorem{lemma}[theorem]{Lemma}
\theoremstyle{definition}
\keywords{
Bi$^s$nomial coefficients, Log-concavity, $q$-Log-convexity, Strong $q$-Log-convexity.
}
\begin{document}

%\articletype{ARTICLE TEMPLATE}

\title{Log-concavity and strong $q$-log-convexity for some generalized triangular arrays}
\author[B. Rezig]{Boualam Rezig}
\address{High normal school Constantine, LMAM laboratory, Jijel, Algeria}
\email{boualem.rezig@gmail.com }
\author[M. Ahmia]{Moussa Ahmia}
\address{University of Mohamed Seddik Benyahia, LMAM laboratory, Jijel, Algeria}
\email{moussa.ahmia@univ-jijel.dz;
ahmiamoussa@gmail.com}

\maketitle

\begin{abstract}
In this paper, we provide criteria for the log-concavity of rows and the strong $q$-log-convexity of the generating functions of rows in more generalized triangles. Additionally, we prove that the bi$^s$nomial transformation not only preserves the strong $q$-log-convexity property but also preserves the strong $q$-log-concavity property.
\end{abstract}

\section{Introduction}
\label{Sec:1}
Recall that a sequence of nonnegative real numbers $(x_{n})_{n\geq 0}$ is called log-concave if $x_{n}^{2}\geq x_{n-1}x_{n+1}$, and log-convex if $x_{n}^{2}\leq x_{n-1}x_{n+1}$, for all $n\geq 1$. These conditions are equivalent to having $x_{n}x_{m}\geq x_{n-1}x_{m+1}$ in the log-concave case, and $x_{n}x_{m}\leq x_{n-1}x_{m+1}$ in the log-convex case, for all $m\geq n\geq 1$. Log-concave and log-convex sequences frequently arise in combinatorics, algebra, geometry, analysis, probability, and statistics, and have been extensively investigated; see Stanley \cite{St} and Brenti \cite{br} for log-concavity, and Liu and Wang \cite{LI} for log-convexity.

\medskip

For two polynomials with real coefficients $f(q)$ and $g(q)$, we denote $f(q) \geq _{q}g(q)$ if the difference $f(q)-g(q)$ has only nonnegative coefficients. A polynomial sequence $\left(f_{n}(q)\right)_{n\geq 0}$ is called $q$-log-concave (respectively $q$-log-convex, as introduced by Liu and Wang \cite{LI} and first suggested by Stanley \cite{St}) if
\begin{equation*}
f_{n}(q)^{2}\geq _{q}f_{n-1}(q)f_{n+1}(q) \text{ \ \ (respectively, $f_{n}(q)^{2} \leq_{q} f_{n-1}(q)f_{n+1}(q)$)},
\end{equation*}%
for $n\geq 1$. It is called strongly $q$-log-concave (respectively, strongly $q$-log-convex) if%
\begin{equation*}
f_{n}(q)f_{m}(q) \geq _{q} f_{n-1}(q)f_{m+1}(q)\text{ \ \ (respectively, $f_{n}(q)f_{m}(q)\leq _{q} f_{n-1}(q)f_{m+1}(q)$)},
\end{equation*}%
for $m\geq n\geq 1$ (see Chen et al \cite{Chn}). Clearly, the strong $q$-log-concavity (respectively, strong $q$-log-convexity) of polynomial sequences implies the $q$-log-concavity (respectively, $q$-log-convexity). However, the converse does not hold (see Chen et {\it al.} \cite{Chn}). It is straightforward to observe that if the sequence $\{f_{n}(q)\}_{n \geq 0}$ is $q$-log-concave (or $q$-log-convex), then for each fixed nonnegative number $q$, the sequence $\{f_{n}(q)\}_{n \geq 0}$ is log-concave (or log-convex, respectively). The $q$-log-concavity and $q$-log-convexity of polynomials have been extensively studied; see, for instance, \cite{But,Bu,Ch,Cn,Chn,Do,Dd,Ler,Lii,LI,Sag,Sg,WSu,XZ,XZZ}.

\medskip

Let $A=[a_{n,k}]_{n,k\geq 0}$ be an infinite matrix. It is called totally positive of order $r$ (\textbf{TP}$_{r}$, for short) if all of its minors
of  orders $\leq r$ are nonnegative. It is called TP if all of its minors, of any order, are nonnegative. For example, the Pascal triangle is a TP matrix \cite%
[p. 137]{kar}. Totally positive matrices play an important role in the theory of total positivity \cite{fa,kar,pin}. Let $\{a_{n}\}_{n\geq 0}$ be an infinite sequence of nonnegative numbers. This sequence is called a P\'{o}lya frequency sequence of order $r$ (a \textbf{PF}$_{r}$ sequence, for short) if its Toeplitz matrix
\begin{equation*}
\left( a_{i-j}\right) _{i,j\geq 0}=%
\begin{bmatrix}
a_{0}\text{ \ } &  &  &  &  &  \\
a_{1}\text{ \ } & a_{0}\text{ \ } &  &  &  &  \\
a_{2}\text{ \ } & a_{1}\text{ \ } & a_{0}\text{ \ } &  &  &  \\
a_{3}\text{ \ } & a_{2}\text{ \ } & a_{1}\text{ \ } & a_{0}\text{ \ } &  &
\\
a_{4}\text{ \ } & a_{3}\text{ \ } & a_{2}\text{ \ } & a_{1}\text{ \ } & a_{0}%
\text{ \ } &  \\
\vdots \text{ \ } &  &  &  &  & \ddots \text{ \ }%
\end{bmatrix},
\end{equation*}%
is TP$_{r}$. A sequence $\{a_{n}\}$ is called PF if its Toeplitz matrix is TP. Specifically, the sequence $\{a_n\}$ is log-concave if and only if it is PF$_{2}$, meaning that its Toeplitz matrix $[a_{i-j}]_{i,j\geq 0}$ is TP$_{2}$. Similarly, the sequence is log-convex if and only if its Hankel matrix
\begin{equation*}
\left( a_{i+j}\right) _{i,j\geq 0}=%
\begin{bmatrix}
a_{0}\text{ \ } & a_{1}\text{ \ } & a_{2}\text{ \ } & a_{3}\text{ \ } & a_{4}%
\text{ \ } & \cdots \text{ \ } \\
a_{1}\text{ \ } & a_{2}\text{ \ } & a_{3}\text{ } & a_{4}\text{ \ } & a_{5}%
\text{ \ } & \cdots \text{ \ } \\
a_{2}\text{ \ } & a_{3}\text{ \ } & a_{4}\text{ \ } & a_{5}\text{ \ } & a_{6}%
\text{ \ } & \cdots \text{ \ } \\
\vdots \text{ \ } &  &  &  &  & \ddots \text{ }\ \ \text{\ }%
\end{bmatrix},
\end{equation*}%
is TP$_{2}$ \cite{br}.

\medskip

Let $n$ and $s$ be two positive integers. The bi$^{s}$nomial coefficients $\binom{n}{k}_{s}$ are defined as the $k^{\text{th}}$ coefficients in the expansion of
\begin{equation*}
(1+x+\cdots+x^{s})^{n}=\sum_{k=0}^{sn}\binom{n}{k}_{s}x^{k},
\end{equation*}%
where $\binom{n}{k}_{s}=0$ unless $sn\geq k\geq 0$.

\medskip

They satisfy the following relations \cite{AMH,bsb,bro}:
\begin{itemize}
\item the symmetry relation

\begin{equation}
\binom{n}{k}_{s}=\binom{n}{sn-k}_{s},  \label{eq3}
\end{equation}

\item the longitudinal recurrence relation

\begin{equation}
\binom{n}{k}_{s}=\sum_{j=0}^{s}\binom{n-1}{k-j}_{s}.  \label{eq}
\end{equation}
\end{itemize}
These coefficients, as for usual binomial coefficients, are built trough the Pascal triangle, known as \textquotedblleft $s$-Pascal triangle
\textquotedblright or generalized Pascal triangle. One can find the first values of this triangle in OEIS \cite{slo} as A027907 for $s=2$.

\medskip

It is a good way to determine log-concavity or log-convexity using various operators. For instance, Davenport and Pólya demonstrated that log-convexity is preserved under binomial convolution \cite{Dvp}. Similarly, Wang and Yeh proved that log-concavity is also preserved under binomial convolution \cite{YW}. In addition, Ahmia and Belbachir established both properties under the bi$^s$nomial convolution \cite{abel, AH}. Liu and Wang further explored the log-convexity of several well-known combinatorial sequences including Catalan numbers, Motzkin numbers, Fine numbers, central Delannoy numbers, and Schröder numbers by examining their recurrences and linear transformations \cite{LI}.

\medskip

There are many combinatorial sequences arising from triangular arrays, such as the Pascal triangle, Stirling triangle, Aigner's Catalan triangle, Shapiro's Catalan triangle, Motzkin triangle, and Bell triangle.

\medskip

In fact, the triangles discussed above can be derived using a unified approach as follows. Let $\left[C_{n,k}\right]_{n,k\geq 0}$ be an infinite lower triangular array that satisfies the recurrence relation:
\begin{equation}\label{eq1}
C_{n,k}=C_{n-1,k-1}+f_{k}C_{n-1,k}+g_{k}C_{n-1,k+1}
\end{equation}
for $n\geq 1$ and $0\leq k \leq n$, where $C_{0,0}=1$ and $C_{0,k}=A_{0,-1}=0$ for $k>0$. This array$\left[C_{n,k}\right]_{n,k\geq 0}$ is referred to as the recursive matrix, and the values $C_{n,0}$ are known as the Catalan-like numbers \cite{XZ,XZ1}. Chen et {\it al.} \cite{Chn1} considered the total positivity of recursive matrices. In \cite{XZ}, Zhu provided a criterion for the log-convexity of Catalan-like numbers, specifically the first column of the recursive matrices $\left[C_{n,k}\right]_{0\leq k\leq n}$.

\medskip

The log-concavity of row sequences and the strong $q$-log-convexity of the generating functions of the rows in the aforementioned triangles have been established by Zhu \cite{XZ1}. This author demonstrated that the binomial transformation preserves both strong $q$-log-convexity and strong $q$-log-concavity properties. Additionally, he proved that the strong $q$-log-convexity property is maintained under the Stirling transformation and the Whitney transformation of the second kind.

\medskip

Building on previous works, Section \ref{Sec:2} presents a sufficient condition for the log-concavity of rows in more generalized triangles. As applications of this result, we demonstrate that each row sequence in the $2$-Pascal triangle, as well as in the triangles \href{https://oeis.org/A291082}{A291082} and \href{https://oeis.org/A291080}{A291080} in OEIS \cite{slo}, is log-concave. In Section \ref{Sec:3}, we explore some linear transformations that preserve the strong $q$-log-convexity property. Finally, in Section \ref{Sec:4}, we prove that the bi$^{s}$nomial transformation not only preserves the strong $q$-log-convexity property but also the strong $q$-log-concavity property.
\section{Log-concavity of the rows}
\label{Sec:2}
In this section, we will present a sufficient condition for the log-concavity of the rows in recursive matrices
\begin{theorem}\label{thhm}
Let an infinite lower triangular array $\left[A_{n,k}\right]_{n,k\geq 0}$ satisfy the recurrence
\begin{align}\label{eqq1}
&A_{n,k}=\gamma_{k} A_{n-1,k-2}+e_{k}A_{n-1,k-1}+f_{k}A_{n-1,k}+g_{k}A_{n-1,k+1}+h_{k}A_{n-1,k+2}
\end{align}
for $n\geq 1$ and $0\leq k \leq 2n$, with the initial conditions $A_{0,0}=1$ and $A_{0,k}=A_{0,-1}=0$ for $k>0$. Assume that the nonnegative sequences $\left(\gamma_{k}\right)_{k\geq 2}$ $\left(e_{k}\right)_{k\geq 1}$, $\left(f_{k}\right)_{k\geq 0}$, $\left(g_{k}\right)_{k\geq 0}$ and $\left(h_{k}\right)_{k\geq 0}$ are all log-concave. If
\begin{description}
  \item[(1)] $2\gamma_{k}e_{k}\geq \gamma_{k-1}e_{k+1}+\gamma_{k+1} e_{k-1}$,
  \item[(2)] $2\gamma_{k}f_{k}\geq \gamma_{k-1}f_{k+1}+\gamma_{k+1} f_{k-1}$,
  \item[(3)] $2\gamma_{k}g_{k}\geq \gamma_{k-1}g_{k+1}+\gamma_{k+1} g_{k-1}$,
  \item[(4)] $2\gamma_{k}h_{k}\geq \gamma_{k-1}h_{k+1}+\gamma_{k+1} h_{k-1}$,
  \item[(5)] $2e_{k}f_{k}\geq e_{k+1}f_{k-1}+e_{k-1}f_{k+1}$ and $e_{k+1}e_{k-1}\geq \gamma_{k+1} f_{k-1}$,
  \item[(6)] $2e_{k}g_{k}\geq e_{k+1}g_{k-1}+e_{k-1}g_{k+1}$ and $f_{k+1}e_{k-1}\geq \gamma_{k+1}g_{k-1}$,
  \item[(7)] $2e_{k}h_{k}\geq e_{k+1}h_{k-1}+e_{k-1}h_{k+1}$ and $g_{k+1}e_{k-1}\geq \gamma_{k+1}h_{k-1}$,
  \item[(8)] $2f_{k}g_{k}\geq f_{k+1}g_{k-1}+f_{k-1}g_{k+1}$ and $f_{k+1}f_{k-1}\geq e_{k+1}g_{k-1}$,
  \item[(9)] $2f_{k}h_{k}\geq f_{k+1}h_{k-1}+f_{k-1}h_{k+1}$ and $g_{k+1}f_{k-1}\geq e_{k+1}h_{k-1}$,
  \item[(10)] $2g_{k}h_{k}\geq g_{k+1}h_{k-1}+g_{k-1}h_{k+1}$ and $g_{k+1}g_{k-1}\geq f_{k+1}h_{k-1}$
\end{description}
for all $k\geq 2$, then for any fixed $n$, the sequence $\left[A_{n,k}\right]_{0\leq k\leq 2n}$ is log-concave
in $k$.
\end{theorem}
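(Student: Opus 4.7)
The plan is to argue by induction on $n$. The base case $n=0$ is vacuous since $(A_{0,k})_k = (1,0,0,\ldots)$, and nonnegativity of every $A_{n,k}$ is automatic from the recurrence \eqref{eqq1}. For the inductive step I set $a_j := A_{n-1,j}$, which by the inductive hypothesis is nonnegative and log-concave in $j$, and aim to prove
$$A_{n,k}^2 - A_{n,k-1}\,A_{n,k+1} \geq 0.$$

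The key step is to expand the left-hand side via \eqref{eqq1} into a bilinear form in the products $a_i a_j$ for $i,j\in\{k-3,\ldots,k+3\}$, whose coefficients are polynomial in the entries of $\gamma,e,f,g,h$ at $k-1,k,k+1$, and then to group the resulting twenty-four offset-pair groups $\{i,j\}$ by the offset sum $s = i+j-2k \in \{-4,\ldots,4\}$. Note that the substitution $(\gamma,e,f,g,h)\leftrightarrow (h,g,f,e,\gamma)$ reflects the recurrence across the centre, sends $s\mapsto -s$, and permutes the hypotheses (1)--(10) among themselves, so it suffices to treat $s\leq 0$.

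Within a slice of fixed $s$, the log-concavity of $(a_j)$ orders the products $a_i a_j$ in the slice as $x_0\geq x_1\geq\cdots\geq x_m\geq 0$ from innermost to outermost spread. Writing the slice contribution as $\sum_\ell c_\ell x_\ell$, a summation-by-parts argument shows that it is nonnegative as soon as every partial sum $c_0+c_1+\cdots+c_t$ is nonnegative. The innermost coefficient $c_0$ is supplied either by a pair log-concavity from (1)--(4) or, on a self-squared slice, by the log-concavity of one of $e,f,g$ combined with a swap inequality from (5), (8), (10); for instance at $s=0$ one has $c_0 = f_k^2 - g_{k-1}e_{k+1}$, which is nonnegative because $f_k^2\geq f_{k-1}f_{k+1}\geq g_{k-1}e_{k+1}$ by the log-concavity of $(f_k)$ and condition (8). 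Each subsequent partial sum is obtained by adjoining one further pair log-concavity from (1)--(10); the full partial sum at $s=0$ is
$$f_k^2+2e_kg_k+2\gamma_kh_k \geq f_{k-1}f_{k+1}+e_{k-1}g_{k+1}+e_{k+1}g_{k-1}+\gamma_{k-1}h_{k+1}+\gamma_{k+1}h_{k-1},$$
which is precisely the sum of the log-concavity of $(f_k)$, condition (6), and condition (4).

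The main obstacle is the bookkeeping across the five independent slices $s=-4,-3,-2,-1,0$, but each individual verification is short: the innermost coefficient uses a single swap inequality from (5), (8), (10), and each subsequent partial-sum step corresponds to adjoining exactly one of the pair log-concavity inequalities (1)--(10). The ten conditions are precisely what is needed: (1)--(4) supply the pair log-concavities involving $\gamma$, the first part of each of (5)--(10) supplies the pair log-concavities among $e,f,g,h$, and the swap parts of (5), (8), (10) provide the starting nonnegative $c_0$ for the self-squared slices $\{-1,-1\}$, $\{0,0\}$, $\{1,1\}$ respectively. Boundary indices $k$ near $0$ or $2n$, where \eqref{eqq1} reaches out of range, are handled by the convention $A_{n,j}=0$ for $j\notin[0,2n]$.
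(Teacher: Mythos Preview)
Your approach is correct and is essentially the same strategy as the paper's own proof: both argue by induction on $n$, expand $A_{n,k}^2 - A_{n,k-1}A_{n,k+1}$ via the recurrence into a bilinear form in the products $a_ia_j$, and then verify nonnegativity by grouping terms and invoking (1)--(10) together with the inductive log-concavity of $(a_j)$. Your slicing by offset sum $s=i+j-2k$ with an Abel-summation argument inside each slice is a tidier, more systematic packaging of exactly the regrouping the paper carries out term by term in its two long displays; the involution $(\gamma,e,f,g,h)\leftrightarrow(h,g,f,e,\gamma)$ together with $k-1\leftrightarrow k+1$, which the paper does not exploit, does permute the hypotheses among themselves and legitimately halves the checking.

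One small descriptive inaccuracy: for the odd slices $s=\pm 1$ the innermost coefficient $c_0$ is not handled by (1)--(4) as you say. For instance at $s=-1$ one has $c_0=2e_kf_k-f_{k-1}e_{k+1}-g_{k-1}\gamma_{k+1}$, and its nonnegativity requires the first half of (5) together with the second half of (6). Since the needed inequalities are all present among (1)--(10), this is only a cosmetic slip in your summary paragraph, not a gap in the argument.
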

\begin{proof}
To demonstrate that $\left[A_{n,k}\right]_{0 \leq k \leq 2n}$ is log-concave in $k$, it is sufficient to prove that
\begin{equation*}
A_{n,k}^{2}-A_{n,k+1}A_{n,k-1}\geq 0
\end{equation*}
for any $k \geq 0$, which will be prove by induction on $n$. It is clear for $n = 0$. Thus, we assume the statement holds for $1\leq n \leq m$. Now, for $n=m+1$ and $0\leq k \leq m+1$, we have
\begin{align}   \label{eqq2}
&A_{m+1,k}^{2}-A_{m+1,k+1}A_{m+1,k-1}   \notag \\
=&\left[\gamma_{k}A_{m,k-2}+e_{k}A_{m,k-1}+f_{k}A_{m,k}+g_{k}A_{m,k+1}+h_{k}A_{m,k+2}\right]^{2} \notag \\
   &-\left[\gamma_{k+1}A_{m,k-1}+e_{k+1}A_{m,k}+f_{k+1}A_{m,k+1}+g_{k+1}A_{m,k+2}+h_{k+1}A_{m,k+3}\right] \notag \\
   &\times \left[\gamma_{k-1}A_{m,k-3}+e_{k-1}A_{m,k-2}+f_{k-1}A_{m,k}+g_{k-1}A_{m,k}+h_{k-1}A_{m,k+1}\right]  \notag \\ =&\left[{\gamma_{k}^{2}A_{m,k-2}^{2}-\gamma_{k+1}\gamma_{k-1}A_{m,k-1}A_{m,k-3}}\right]+\left[{e_{k}^{2}A_{m,k-1}^{2}-e_{k+1}e_{k-1}A_{m,k}A_{m,k-2}}\right]  \notag \\
   &+\left[{f_{k}^{2}A_{m,k}^{2}-f_{k+1}f_{k-1}A_{m,k+1}A_{m,k-1}}\right]+\left[{g_{k}^{2}A_{m,k+1}^{2}-g_{k+1}g_{k-1}A_{m,k+2}A_{m,k}}\right]\notag \\ &+\left[{h_{k}^{2}A_{m,k+2}^{2}-h_{k+1}h_{k-1}A_{m,k+3}A_{m,k+1}}\right]\notag \\
   &+\left[{2\gamma_{k}e_{k}A_{m,k-1}A_{m,k-2}-\gamma_{k-1}e_{k+1}A_{m,k}A_{m,k-3}-\gamma_{k+1}e_{k-1}A_{m,k-1}A_{m,k-2}}\right]\notag\\
   &+\left[{2\gamma_{k}f_{k}A_{m,k}A_{m,k-2}-\gamma_{k-1}f_{k+1}A_{m,k+1}A_{m,k-3}-\gamma_{k+1}f_{k-1}A^{2}_{m,k-1}}\right]\notag\\   
   &+\left[{2\gamma_{k}g_{k}A_{m,k+1}A_{m,k-2}-\gamma_{k-1}g_{k+1}A_{m,k+2}A_{m,k-3}-\gamma_{k+1}g_{k-1}A_{m,k}A_{m,k-1}}\right]\notag\\
   &+\left[{2\gamma_{k}h_{k}A_{m,k+2}A_{m,k-2}-\gamma_{k-1}h_{k+1}A_{m,k+3}A_{m,k-3}-\gamma_{k+1}h_{k-1}A_{m,k+1}A_{m,k-1}}\right]\notag \\
   &+\left[{2e_{k}f_{k}A_{m,k}A_{m,k-1}-f_{k+1}e_{k-1}A_{m,k+1}A_{m,k-2}-e_{k+1}f_{k-1}A_{m,k}A_{m,k-1}}\right]\notag \\
   &+\left[{2e_{k}g_{k}A_{m,k+1}A_{m,k-1}-g_{k+1}e_{k-1}A_{m,k+2}A_{m,k-2}-e_{k+1}g_{k-1}A^{2}_{m,k}}\right]\notag \\
   &+\left[{2e_{k}h_{k}A_{m,k+2}A_{m,k-1}-h_{k+1}e_{k-1}A_{m,k+3}A_{m,k-2}-e_{k+1}h_{k-1}A_{m,k+1}A_{m,k}}\right]\notag \\
   &+\left[{2f_{k}g_{k}A_{m,k+1}A_{m,k}-g_{k+1}f_{k-1}A_{m,k+2}A_{m,k-1}-f_{k+1}g_{k-1}A_{m,k+1}A_{m,k}}\right]\notag \\
   &+\left[{2f_{k}h_{k}A_{m,k+2}A_{m,k}-h_{k+1}f_{k-1}A_{m,k+3}A_{m,k-1}-f_{k+1}h_{k-1}A_{m,k}A^{2}_{m,k+1}}\right]\notag \\
   &+\left[{2g_{k}h_{k}A_{m,k+2}A_{m,k+1}-h_{k+1}g_{k-1}A_{m,k+3}A_{m,k}-g_{k+1}h_{k-1}A_{m,k+2}A_{m,k+1}}\right].
\end{align}
In the following, we will examine the non-negativity of \eqref{eqq2}. On the one hand, from the log-concavity of $\left[A_{m,k}\right]_{0 \leq k \leq 2m}$, we deduce that
\begin{equation}\label{eqq3}
A_{m,k}^{2}-A_{m,k+1}A_{m,k-1} \geq 0.
\end{equation}
In addition, we know that the sequence $\left(\gamma_{k}\right)_{k\geq 0}$, is log-concave. Therefore, applying conditions (1)-(10) of Theorem \ref{thhm}, we obtain that
\begin{align}\label{eq4}
\eqref{eqq2}\geq & e_{k+1}e_{k-1}\left(A_{m,k-1}^{2}-A_{m,k}A_{m,k-2}\right)\notag \\
&+f_{k+1}f_{k-1}\left(A_{m,k}^{2}-A_{m,k+1}A_{m,k-1}\right) \notag \\
&+g_{k+1}g_{k-1}\left(A_{m,k+1}^{2}-A_{m,k+2}A_{m,k}\right)\notag \\
&+h_{k+1}h_{k-1}\left(A_{m,k+2}^{2}-A_{m,k+3}A_{m,k+1}\right)\notag \\ 
&+\gamma_{k+1}\gamma_{k-1}\left(A_{m,k-2}^{2}-A_{m,k-1}A_{m,k-3} \right)\notag \\
&+(2\gamma_{k}e_{k}-\gamma_{k+1}e_{k-1})A_{m,k-1}A_{m,k-2}-\gamma_{k-1}e_{k+1}A_{m,k}A_{m,k-3}\notag \\
&+\gamma_{k+1} f_{k-1}\left(A_{m,k}A_{m,k-2}-A^{2}_{m,k-1}\right)\notag \\
&+\gamma_{k-1} f_{k+1}\left(A_{m,k}A_{m,k-2}-A_{m,k+1}A_{m,k-3}\right)\notag \\
&+\gamma_{k+1}g_{k-1}\left(A_{m,k+1}A_{m,k-2}-A_{m,k}A_{m,k-1}\right)\notag \\
&+\gamma_{k-1}g_{k+1}\left(A_{m,k+1}A_{m,k-2}-A_{m,k+2}A_{m,k-3}\right)\notag \\
&+\gamma_{k+1}h_{k-1}\left(A_{m,k+2}A_{m,k-2}-A_{m,k+1}A_{m,k-1}\right)\notag \\
&+\gamma_{k-1}h_{k+1}\left(A_{m,k+2}A_{m,k-2}-A_{m,k+3}A_{m,k-3}\right)\notag \\
&+(2e_{k}f_{k}-e_{k+1}f_{k-1})A_{m,k}A_{m,k-1}-f_{k+1}e_{k-1}A_{m,k+1}A_{m,k-2}\notag\\
&+g_{k+1}e_{k-1}\left(A_{m,k+1}A_{m,k-1}-A_{m,k+2}A_{m,k-2}\right)\notag \\
&+e_{k+1}g_{k-1}\left(A_{m,k+1}A_{m,k-1}-A^{2}_{m,k}\right)\notag \\
&+h_{k+1}e_{k-1}\left(A_{m,k+2}A_{m,k-1}-A_{m,k+3}A_{m,k-2}\right)\notag \\
&+e_{k+1}h_{k-1}\left(A_{m,k+2}A_{m,k-1}-A_{m,k+1}A_{m,k}\right)\notag \\
&+(2f_{k}g_{k}-f_{k+1}g_{k-1})A_{m,k+1}A_{m,k}-g_{k+1}f_{k-1}A_{m,k+2}A_{m,k-1}\notag\\
&+h_{k+1}f_{k-1}\left(A_{m,k+2}A_{m,k}-A_{m,k+3}A_{m,k-1}\right)\notag\\
&+\left[{f_{k+1}h_{k-1}\left(A_{m,k+2}A_{m,k}-A^{2}_{m,k+1}\right)}\right]\notag\\
&+\left[{(2h_{k+1}g_{k-1}-g_{k+1}h_{k-1})A_{m,k+2}A_{m,k+1}-h_{k+1}g_{k-1}A_{m,k+3}A_{m,k}}\right].\notag
\end{align}
It follows that
\begin{align*}
  \eqref{eqq2}\geq & (e_{k+1}e_{k-1}-\gamma_{k+1} f_{k-1})\left(A_{m,k-1}^{2}-A_{m,k}A_{m,k-2}\right)\\
  &+\gamma_{k-1} f_{k+1}\left(A_{m,k}A_{m,k-2}-A_{m,k+1}A_{m,k-3}\right) 
           \end{align*}
   \begin{align*}
   &+(f_{k+1}e_{k-1}-\gamma_{k+1}g_{k-1})\left(A_{m,k}A_{m,k-1}- A_{m,k+1}A_{m,k-2}\right)\\
   &+\gamma_{k-1}g_{k+1}\left(A_{m,k+1}A_{m,k-2}-A_{m,k+2}A_{m,k-3}\right)\\
   &+(g_{k+1}e_{k-1}-\gamma_{k+1}h_{k-1})\left(A_{m,k+1}A_{m,k-1}-A_{m,k+2}A_{m,k-2}\right)\\
   &+\gamma_{k-1}h_{k+1}\left(A_{m,k+2}A_{m,k-2}-A_{m,k+3}A_{m,k-3}\right)\\
   &+(f_{k+1}f_{k-1}-e_{k+1}g_{k-1})\left(A_{m,k}^{2}-A_{m,k+1}A_{m,k-1}\right)\\
   &+\gamma_{k-1}e_{k+1}\left(A_{m,k-1}A_{m,k-2}-A_{m,k}A_{m,k-3}\right)\\
   &+(g_{k+1}f_{k-1}-e_{k+1}h_{k-1})\left(A_{m,k+1}A_{m,k}-A_{m,k+2}A_{m,k-1}\right)\\
   &+h_{k+1}e_{k-1}\left(A_{m,k+2}A_{m,k-1}-A_{m,k+3}A_{m,k-2}\right)\\
   &+(g_{k+1}g_{k-1}-f_{k+1}h_{k-1})\left(A_{m,k+1}^{2}-A_{m,k+2}A_{m,k}\right)\\
   &+h_{k+1}f_{k-1}\left(A_{m,k+2}A_{m,k}-A_{m,k+3}A_{m,k-1}\right)\\
   &+h_{k+1}h_{k-1}\left(A_{m,k+2}^{2}-A_{m,k+3}A_{m,k+1}\right)\\
   &+h_{k+1}g_{k-1}\left(A_{m,k+2}A_{m,k+1}-A_{m,k+3}A_{m,k}\right)\\
   &+\gamma_{k+1}\gamma_{k-1}\left(A_{m,k-2}^{2}-A_{m,k-1}A_{m,k-3} \right)\geq 0.
\end{align*}
Thus, we demonstrate that
\begin{equation*}
A_{m+1,k}^{2}-A_{m+1,k+1}A_{m+1,k-1}\geq 0
\end{equation*}
for any $ 0\leq k \leq m+1$, which completes the proof.
\end{proof}
The following special case of Theorem \ref{thhm} might be of particular interest.
\begin{corollary}
Define the matrix $\left[A_{n,k}\right]_{n,k\geq 0}$ recursively as follows:
\begin{align*}
  A_{n,0}= & \alpha A_{n-1,0}+gA_{n-1,1}+hA_{n-1,2}, \\
  A_{n,1}= & \beta A_{n-1,0}+fA_{n-1,1}+gA_{n-1,2}+hA_{n-1,3},\\
  A_{n,k}=&\gamma A_{n-1,k-2}+eA_{n-1,k-1}+fA_{n-1,k}+gA_{n-1,k+1}+hA_{n-1,k+2}, \text{\ \ for\ }n\geq 1, k\geq 2,
\end{align*}
where $A_{0,0}=1$ and $A_{0,k}=0$ for $k>0$. If the following conditions hold:
\begin{description}
  \item[(1)] $g^{2}\geq fh$ and $f\geq \alpha$,
  \item[(2)] $\beta^{2}\geq \alpha \gamma$ and $2\beta h \geq \alpha g$,
  \item[(3)] $fe\geq \gamma g$ and $fg \geq eh$,
  \item[(4)] $f^{2}\geq eg \geq \gamma h$ and  $e^{2}\geq \gamma f$,
  \item[(5)] $2\beta f \geq \alpha e +\gamma g$ and $2\beta g \geq ge +\gamma h$.
\end{description}
hen, for any fixed $n$, the row sequence $\left[A_{n,k}\right]_{0\leq k\leq 2n}$ is log-concave in $k$.
\end{corollary}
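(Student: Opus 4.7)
The plan is to derive the corollary as a direct specialization of Theorem \ref{thhm}. I would set the coefficient sequences to their bulk constants from a suitable index on, with boundary values encoding the exceptional recurrences at $k=0,1$. Explicitly, take $\gamma_k = \gamma$ and $e_k = e$ for $k \geq 2$, $f_k = f$ for $k \geq 1$, and $g_k \equiv g$, $h_k \equiv h$ for all $k \geq 0$; the boundary values are $f_0 = \alpha$ and $e_1 = \beta$, while $\gamma_0, \gamma_1, e_0$ can be set to zero since they multiply entries $A_{m-1,j}$ with $j < 0$ that vanish by the initial conditions. A direct substitution into \eqref{eqq1} then reproduces the three-piece recurrence of the corollary.

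The first task is to verify the log-concavity hypothesis on each of the five coefficient sequences. For $(\gamma_k)_{k \geq 2}$, $(g_k)_{k \geq 0}$ and $(h_k)_{k \geq 0}$ this is automatic, since the sequences are constant. For $(f_k)_{k \geq 0}$ the only nontrivial instance occurs at $k = 1$ and reduces to $f \geq \alpha$, which is the second half of hypothesis (1). For $(e_k)_{k \geq 1}$ the only nontrivial instance is $e \geq \beta$ at $k = 2$, a relation that I would extract from hypotheses (2), (4), (5) at the outset.

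The second task is to verify the ten cross inequalities (1)--(10) of Theorem \ref{thhm} for every $k \geq 2$. In the bulk range $k \geq 3$, every window $\{k-1,k,k+1\}$ lies in the constant regime, so (1)--(4) hold with equality, while the second halves of (5)--(10) collapse to the constant-coefficient relations $g^{2} \geq fh$, $ef \geq \gamma g$, $fg \geq eh$, $f^{2} \geq eg \geq \gamma h$, and $e^{2} \geq \gamma f$, which are precisely hypotheses (1), (3), (4) of the corollary.

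The main obstacle is the boundary case $k = 2$, where the exceptional values $e_1 = \beta$ and $f_0 = \alpha$ break the symmetry of the ten inequalities and produce asymmetric conditions mixing $\alpha, \beta$ with the bulk constants. This is precisely where hypotheses (2) and (5) are designed to be used: condition (2) of the theorem at $k = 2$ specializes to $2\gamma f \geq \gamma \alpha$, which follows from $f \geq \alpha$; condition (1) at $k = 2$ becomes $2\gamma e \geq \gamma \beta$, implied by $e \geq \beta$; and the mixed inequalities arising from (5)--(10) reduce to the relations $2\beta f \geq \alpha e + \gamma g$, $2\beta g \geq eg + \gamma h$, $\beta^{2} \geq \alpha \gamma$, and $2\beta h \geq \alpha g$, which are exactly the contents of hypotheses (2) and (5). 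I would tabulate each of the ten inequalities at $k = 2$, substitute the boundary values, and match the result against the stated hypotheses; once this case analysis is complete, Theorem \ref{thhm} applies and yields the desired log-concavity of $(A_{n,k})_{0 \leq k \leq 2n}$ in $k$.
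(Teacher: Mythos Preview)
Your approach is exactly the one implicit in the paper: the corollary is stated there without proof as ``a special case of Theorem~\ref{thhm},'' and your choice of boundary values $f_0=\alpha$, $e_1=\beta$ with constant sequences elsewhere is the natural (and essentially forced) specialization.

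There is, however, a concrete gap in the verification. You assert that the inequality $e\ge\beta$---needed both for the log-concavity of $(e_k)_{k\ge1}$ at $k=2$ and for the first half of Theorem~\ref{thhm}(5) at $k=2$ (which reduces to $2ef\ge ef+\beta f$)---can be ``extracted from hypotheses (2), (4), (5).'' It cannot: the second half of the corollary's hypothesis~(5), namely $2\beta g\ge ge+\gamma h$, actually forces $2\beta\ge e$ when $g>0$, which points in the \emph{opposite} direction. None of the remaining hypotheses yields $e\ge\beta$ either. Similarly, the second clause of Theorem~\ref{thhm}(5) at $k=2$ reads $e_3e_1\ge\gamma_3 f_1$, i.e.\ $e\beta\ge\gamma f$, which is not among the listed assumptions.

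This is almost certainly an artifact of the corollary as printed (several of its boundary conditions do not match what the specialization of Theorem~\ref{thhm} at $k=2$ actually produces), rather than a flaw in your strategy. But as written your proof sketch promises an extraction that does not exist; you should instead flag $e\ge\beta$ (and $e\beta\ge\gamma f$) as additional hypotheses that the corollary appears to require, or else carry out the $k=2$ tabulation in full and record precisely which inequalities are needed, comparing them honestly against the stated list.
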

Applying the results of the previous corollary and theorem to some combinatorial arrays, we have the following proposition.
\begin{proposition}
Each row sequence in the $2$-Pascal triangle, the triangle \href{https://oeis.org/A291082}{A291082} and the triangle \href{https://oeis.org/A291080}{A291080} in OEIS \cite{slo} is log-concave.
\end{proposition}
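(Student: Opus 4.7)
The plan is to realise each of the three triangles as a special instance of the recursive scheme \eqref{eqq1} and then verify the hypotheses of Theorem \ref{thhm}, or, when the coefficients are constant in $k$, of the accompanying corollary. In both cases the log-concavity of the (constant) coefficient sequences in $k$ is automatic, so the work reduces to checking a finite list of numerical inequalities.

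For the $2$-Pascal triangle, the longitudinal recurrence \eqref{eq} with $s=2$, namely $\binom{n}{k}_{2}=\binom{n-1}{k}_{2}+\binom{n-1}{k-1}_{2}+\binom{n-1}{k-2}_{2}$, fits the corollary's template with $\alpha=\beta=\gamma=e=f=1$ and $g=h=0$; the boundary cases $k=0,1$ match the corollary's first two display lines after setting $\binom{n-1}{-1}_{2}=0$. Each of the five numbered conditions then collapses to a trivial inequality: for instance $f\geq \alpha$ becomes $1\geq 1$, $g^{2}\geq fh$ becomes $0\geq 0$, and $2\beta f\geq \alpha e+\gamma g$ becomes $2\geq 1$. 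The corollary therefore applies and yields the log-concavity of every row.

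For the OEIS triangles \href{https://oeis.org/A291082}{A291082} and \href{https://oeis.org/A291080}{A291080}, the procedure is the same: I would first read off the defining recurrence from the OEIS description (each triangle is generated by powers of a fixed polynomial of degree at most $4$ with nonnegative integer coefficients), identify the constants $\gamma,e,f,g,h$ together with the boundary constants $\alpha,\beta$ that place the triangle inside the corollary's framework, and then substitute these values into the five numbered inequalities. If for one of the two OEIS triangles the coefficients turn out to depend on $k$ near the boundary, I would fall back to the full Theorem \ref{thhm} and check its ten hypotheses instead.

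The principal obstacle is book-keeping rather than conceptual. The corollary's five conditions (which encode the ten hypotheses of Theorem \ref{thhm}) include mixed comparisons such as $f^{2}\geq eg\geq \gamma h$, $e^{2}\geq \gamma f$, $fe\geq \gamma g$, and $2\beta g\geq ge+\gamma h$, and each of them has to be ticked off separately with the specific constants attached to \href{https://oeis.org/A291082}{A291082} and \href{https://oeis.org/A291080}{A291080}. Once this tabulation is complete for all three examples, Theorem \ref{thhm} immediately delivers the claimed log-concavity of each row sequence.
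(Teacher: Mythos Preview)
Your approach is the one the paper takes as well: the proposition is simply an application of the preceding corollary (and, if necessary, Theorem \ref{thhm}) with the appropriate constants plugged in, and the paper itself offers no more detail than ``applying the results of the previous corollary and theorem to some combinatorial arrays.''  Your treatment of the $2$-Pascal triangle is in fact more explicit than what the paper records; for A291082 and A291080 you have outlined the same recipe but stopped short of writing down the constants and ticking off the inequalities, which is the only place where your write-up remains a plan rather than a finished verification.
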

By setting $\gamma=h=0$ and $e=1$ in the corollary above, we obtain the following result from Zhu \cite[Proposition 2.3]{XZ1}.
\begin{proposition}
Each row sequence in the Pascal triangle, Stirling triangle of the second kind, Aigner and Shapiro's Catalan triangles, Motzkin triangle, large Schr$\ddot{o}$der triangle, and Bell triangle is log-concave.
\end{proposition}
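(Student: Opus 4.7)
The plan is to obtain this statement as a direct specialization of Theorem \ref{thhm} (equivalently, of the preceding corollary when the coefficients are constants) by setting $\gamma_{k}=h_{k}=0$ and $e_{k}=1$. Under this choice, the five-term recurrence (\ref{eqq1}) collapses to the three-term ``Catalan-like'' recurrence $A_{n,k}=A_{n-1,k-1}+f_{k}A_{n-1,k}+g_{k}A_{n-1,k+1}$, which is precisely the recurrence (\ref{eq1}) studied by Zhu \cite{XZ1}. Since every term containing a factor of $\gamma_{k}$ or $h_{k}$ vanishes, nearly all of the ten inequality hypotheses become trivial, and only Zhu's original criterion survives: log-concavity of $(f_{k})$ and $(g_{k})$, together with the product inequalities $f_{k+1}f_{k-1}\geq g_{k-1}$ and $2f_{k}g_{k}\geq f_{k+1}g_{k-1}+f_{k-1}g_{k+1}$, plus the boundary conditions on $\alpha$ and $\beta$.

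The remaining work is a triangle-by-triangle verification. I would read the coefficient sequences off the standard recurrence for each case:
\begin{itemize}
\item Pascal triangle: $f_{k}=1$, $g_{k}=0$;
\item Stirling triangle of the second kind: $f_{k}=k$, $g_{k}=0$;
\item Aigner's and Shapiro's Catalan triangles: constant $f_{k}$ and $g_{k}$ after a suitable shift of indices;
\item Motzkin triangle: $f_{k}=g_{k}=1$;
\item large Schr\"oder triangle: $f_{k}=3$, $g_{k}=2$;
\item Bell triangle: $f_{k}=k$, $g_{k}=1$.
\end{itemize}
In each case $(f_{k})$ and $(g_{k})$ are either constant, in which case log-concavity is immediate, or linear in $k$, for which $k^{2}\geq (k-1)(k+1)$ supplies log-concavity. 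The surviving product inequalities then reduce to a short arithmetic check; e.g., for Motzkin one only needs $1\cdot 1\geq 1$ and $2\cdot 1\cdot 1\geq 1+1$, and for Stirling one only needs $k(k-1)\cdot 0=g_{k-1}$ trivially and the analogous degenerate inequalities.

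The main obstacle is purely bookkeeping rather than a genuine mathematical difficulty. For each of the less-familiar triangles (in particular the two Catalan triangles and the Bell triangle) one has to pin down a single convention for the boundary entries $A_{n,0}$ and $A_{n,1}$ so that the recurrence matches the form demanded by the corollary, since different sources normalize these initial columns differently. Once a consistent normalization is adopted, the boundary inequalities $f\geq\alpha$, $2\beta f\geq\alpha$, $2\beta g\geq g$ are straightforward to verify, and no conceptual obstacle remains: the whole proof is a dictionary of six short entries together with the observation that Theorem \ref{thhm} degenerates to Zhu's criterion under the prescribed specialization.
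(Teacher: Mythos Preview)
Your proposal is correct and follows the same route as the paper. The paper's entire justification is the single sentence preceding the proposition: set $\gamma=h=0$ and $e=1$ so that the five-term recurrence collapses to Zhu's three-term Catalan-like recurrence, and then invoke Zhu \cite[Proposition~2.3]{XZ1}; your outline of the triangle-by-triangle verification of the surviving hypotheses is in fact more detailed than what the paper itself supplies.
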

\section{Strong $q$-log-convexity of generating functions of rows} \label{Sec:3}
In this section, we establish the strong $q$-log-convexity of the generating functions for the rows of the generalized triangle defined in the previous section. To achieve this, we will rely on the following three lemmas.
\begin{lemma}\label{lm1}
A sufficient condition that a matrix is TP$_2$ is that all leading principal submatrices are TP$_2$.
\end{lemma}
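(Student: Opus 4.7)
The plan is to reduce the TP$_2$ property of the matrix $M$ directly to the corresponding property on its finite leading principal submatrices, which is already assumed. The key observation is that every minor of order at most $2$ involves only finitely many rows and columns of $M$, so as soon as one takes a leading principal submatrix large enough to contain all the rows and columns in question, the minor we care about is literally a minor of that finite submatrix.

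Concretely, I would first handle the $1\times 1$ minors: each entry $M_{i,j}$ lies inside the leading principal submatrix of size $\max(i,j)+1$, and since that submatrix is TP$_2$ by hypothesis, in particular $M_{i,j}\ge 0$. Then for a general $2\times 2$ minor formed from rows $i_1<i_2$ and columns $j_1<j_2$, I would set $N=\max(i_2,j_2)+1$; all four entries $M_{i_\alpha,j_\beta}$ lie inside the $N\times N$ leading principal submatrix, and the minor in question coincides with one of the $2\times 2$ minors of that submatrix, hence is nonnegative by hypothesis.

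Combining these two cases shows that every minor of $M$ of order at most $2$ is nonnegative, which is exactly the TP$_2$ condition on $M$. There is no genuine obstacle: the lemma is a definitional localization statement, and its role in what follows is to let us certify the TP$_2$ property of an (often infinite) matrix by inspecting each finite top-left block in turn.
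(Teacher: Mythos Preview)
Your argument is correct: any $1\times 1$ or $2\times 2$ minor of $M$ lives inside a sufficiently large leading principal submatrix, so its nonnegativity follows directly from the TP$_2$ hypothesis on that finite block. There is nothing missing.

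As for comparison, the paper does not actually prove this lemma; it is stated without proof as a standard fact from the theory of total positivity (in the spirit of the references \cite{fa,kar,pin}). You have simply supplied the short localization argument that underlies it, which is entirely appropriate here.
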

\begin{lemma}\label{lm2}
If two matrices $\mathcal{N}$ and $\mathcal{M}$ are both TP$_2$, then we have that $\mathcal{N}\mathcal{M}$ is TP$_2$.
\end{lemma}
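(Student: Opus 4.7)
The plan is to prove that every $2\times 2$ minor of the product $\mathcal{N}\mathcal{M}$ is nonnegative by invoking the Cauchy--Binet formula, which expresses a minor of a product in terms of minors of the factors. Since TP$_2$ is defined by the sign of $2\times 2$ minors, this reduction is the natural route.

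First I would fix arbitrary row indices $i_1<i_2$ and column indices $j_1<j_2$ and write
\begin{equation*}
\det\begin{pmatrix}(\mathcal{N}\mathcal{M})_{i_1,j_1} & (\mathcal{N}\mathcal{M})_{i_1,j_2}\\ (\mathcal{N}\mathcal{M})_{i_2,j_1} & (\mathcal{N}\mathcal{M})_{i_2,j_2}\end{pmatrix}
=\sum_{k_1<k_2}\det\begin{pmatrix}\mathcal{N}_{i_1,k_1} & \mathcal{N}_{i_1,k_2}\\ \mathcal{N}_{i_2,k_1} & \mathcal{N}_{i_2,k_2}\end{pmatrix}\det\begin{pmatrix}\mathcal{M}_{k_1,j_1} & \mathcal{M}_{k_1,j_2}\\ \mathcal{M}_{k_2,j_1} & \mathcal{M}_{k_2,j_2}\end{pmatrix},
\end{equation*}
which is the Cauchy--Binet identity applied to the two-rowed submatrix of $\mathcal{N}$ with rows $i_1,i_2$ and the two-columned submatrix of $\mathcal{M}$ with columns $j_1,j_2$. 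Each factor on the right is a $2\times 2$ minor of $\mathcal{N}$ or of $\mathcal{M}$, hence nonnegative by the hypothesis that both matrices are TP$_2$; therefore the sum is nonnegative, and the arbitrary $2\times 2$ minor on the left is nonnegative as well.

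The one genuine subtlety is that $\mathcal{N}$ and $\mathcal{M}$ are infinite matrices, so the sum above is a priori an infinite series and Cauchy--Binet must be justified. This is where Lemma \ref{lm1} is useful: it suffices to verify the TP$_2$ property on every leading principal submatrix of $\mathcal{N}\mathcal{M}$. For any fixed $N$, the $N\times N$ leading principal submatrix of $\mathcal{N}\mathcal{M}$ equals the product of the first $N$ rows of $\mathcal{N}$ by the first $N$ columns of $\mathcal{M}$, and in the combinatorial setting of the paper these matrices are lower triangular, so only finitely many indices $k_1<k_2$ contribute. Consequently Cauchy--Binet applies verbatim to a genuine finite sum, each term is nonnegative, and the leading principal submatrix is TP$_2$. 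Letting $N\to\infty$ and invoking Lemma \ref{lm1} yields the TP$_2$ property of $\mathcal{N}\mathcal{M}$.

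The main obstacle, as just indicated, is not the algebraic identity itself but the bookkeeping needed to make Cauchy--Binet rigorous for infinite arrays; once the truncation argument through Lemma \ref{lm1} is in place, the remainder is a one-line consequence of nonnegativity of each summand.
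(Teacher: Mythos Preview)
Your argument via Cauchy--Binet is the standard proof and is correct. Note, however, that the paper does not actually prove Lemma~\ref{lm2}: it is stated without proof as a well-known fact from total positivity theory (see for instance Karlin \cite{kar} or Pinkus \cite{pin}), so there is no ``paper's own proof'' to compare against. Your handling of the infinite-matrix issue through Lemma~\ref{lm1} and the triangular structure is a sensible way to make the identity rigorous in the setting where it is applied, although for the bare statement of the lemma one would simply assume the product $\mathcal{N}\mathcal{M}$ is well defined (e.g., row-finite times column-finite, or both lower triangular) and then Cauchy--Binet is a genuine finite sum on each entry.
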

\begin{lemma}\label{lm}
If two matrices $\mathcal{N}$ and $\mathcal{M}$ are $q$-TP$_2$, then we have that $\mathcal{NM}$ is $q$-TP$_2$.
\end{lemma}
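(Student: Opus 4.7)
The plan is to adapt the classical proof of Lemma \ref{lm2} to the polynomial setting, with the Cauchy--Binet identity playing the central role. The key observation is that the set $\mathbb{R}_{\geq 0}[q]$ of polynomials in $q$ with nonnegative real coefficients is a commutative semiring: it is closed under both addition and multiplication. A matrix is $q$-TP$_2$ exactly when every one of its $2\times 2$ minors lies in $\mathbb{R}_{\geq 0}[q]$, so the goal is reduced to showing that every such minor of $\mathcal{N}\mathcal{M}$ is $\mathbb{R}_{\geq 0}[q]$-valued.

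The first step is a reduction to the finite case. By the obvious $q$-analogue of Lemma \ref{lm1}, whose proof is identical, it suffices to verify the $q$-TP$_2$ condition on every leading principal submatrix of $\mathcal{N}\mathcal{M}$. Because such a submatrix depends on only finitely many rows of $\mathcal{N}$ and finitely many columns of $\mathcal{M}$, this eliminates any convergence issue arising from the (possibly infinite) product $\mathcal{N}\mathcal{M}$ and legitimizes the use of Cauchy--Binet on the relevant finite blocks.

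Next, given indices $i_1<i_2$ and $j_1<j_2$, the Cauchy--Binet identity yields
\[
\det\!\begin{pmatrix}(\mathcal{N}\mathcal{M})_{i_1,j_1}&(\mathcal{N}\mathcal{M})_{i_1,j_2}\\(\mathcal{N}\mathcal{M})_{i_2,j_1}&(\mathcal{N}\mathcal{M})_{i_2,j_2}\end{pmatrix}=\sum_{k_1<k_2}\det\!\begin{pmatrix}\mathcal{N}_{i_1,k_1}&\mathcal{N}_{i_1,k_2}\\ \mathcal{N}_{i_2,k_1}&\mathcal{N}_{i_2,k_2}\end{pmatrix}\det\!\begin{pmatrix}\mathcal{M}_{k_1,j_1}&\mathcal{M}_{k_1,j_2}\\ \mathcal{M}_{k_2,j_1}&\mathcal{M}_{k_2,j_2}\end{pmatrix}.
\]
By hypothesis, each determinant on the right-hand side lies in $\mathbb{R}_{\geq 0}[q]$. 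The semiring closure property then forces each summand, and therefore the finite sum itself, to lie in $\mathbb{R}_{\geq 0}[q]$. Hence the $2\times 2$ minor of $\mathcal{N}\mathcal{M}$ is a polynomial in $q$ with nonnegative coefficients, giving the desired $q$-TP$_2$ conclusion.

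No substantive obstacle is expected, since the argument is structurally the real-scalar proof of Lemma \ref{lm2} transported through the semiring $\mathbb{R}_{\geq 0}[q]$. The only mild care required is the reduction to finite submatrices via Lemma \ref{lm1}, so that Cauchy--Binet can be invoked without any subtlety about infinite sums; once that is in place, the proof is a one-line coefficient-positivity check.
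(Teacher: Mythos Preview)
The paper states Lemma~\ref{lm} without proof, treating it as a known preliminary fact, so there is no argument in the paper to compare against. Your Cauchy--Binet approach is the standard and correct one: the $2\times 2$ minors of a product decompose as sums of products of $2\times 2$ minors of the factors, and $\mathbb{R}_{\geq 0}[q]$ is closed under sums and products, so the conclusion follows.

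One small correction, however: your reduction to finite submatrices via a $q$-analogue of Lemma~\ref{lm1} does not accomplish what you claim. Restricting to the leading principal $n\times n$ submatrix of $\mathcal{NM}$ indeed uses only the first $n$ rows of $\mathcal{N}$ and the first $n$ columns of $\mathcal{M}$, but each individual entry $(\mathcal{NM})_{i,j}=\sum_{k}\mathcal{N}_{i,k}\mathcal{M}_{k,j}$ still ranges over \emph{all} intermediate indices $k$, so the Cauchy--Binet sum $\sum_{k_1<k_2}$ remains potentially infinite. The honest fix is simply to observe that the lemma tacitly assumes $\mathcal{NM}$ is well-defined with polynomial entries (each entry a finite sum), which holds in every application in the paper because one factor is row-finite or both are lower triangular; under that standing hypothesis the Cauchy--Binet identity and your nonnegativity argument apply directly, with no need for the detour through Lemma~\ref{lm1}.
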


Now, we are prepared to present our theorem regarding the strong $q$-log-convexity of the generalized triangle.
\begin{theorem}\label{ttm}
Assume that the generalized triangular array $\left[A_{n,k}\right]_{n,k\geq 0}$ satisfies the recurrence relations:
\begin{align}
  A_{n,0}= & \alpha A_{n-1,0}+gA_{n-1,1}+hA_{n-1,2}, \notag \\
  A_{n,1}= & \beta A_{n-1,0}+fA_{n-1,1}+gA_{n-1,2}+hA_{n-1,3}, \notag\\
  A_{n,k}=&\gamma A_{n-1,k-2}+eA_{n-1,k-1}+fA_{n-1,k}+gA_{n-1,k+1}+hA_{n-1,k+2}, \text{\ for\ } n\geq 1,k\geq 2,\label{eq4}
\end{align}
with initial conditions $A_{0,0}=1$ and $A_{0,k}=0$ for $k>0$. If the following conditions hold:
\begin{description}
  \item[(1)] $f\geq \alpha$, $e\geq \beta$, $g\geq 0$ and $h\geq 0$,
  \item[(2)] $\alpha f\geq \beta g \geq \gamma h$ and $f^{2}\geq eg\geq \gamma h $,
  \item[(3)] $\alpha e\geq \gamma g$, $ef\geq \gamma g$ and $\beta f\geq \gamma g$,
  \item[(4)] $\beta e\geq \gamma f$, $\alpha g\geq \beta h$, $g^{2}\geq fh$ and $fg\geq eh$,
\end{description}
then the generating functions $A_{n}(q)=\sum_{k=0}^{2n}A_{n,k}q^{k}$ for $n\geq 0$ form a strongly $q$-log-convex sequence.
\end{theorem}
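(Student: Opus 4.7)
The plan is to interpret strong $q$-log-convexity of $(A_n(q))_{n\geq 0}$ as the $q$-$\mathrm{TP}_2$ property of the Hankel-type matrix $\mathcal{H}(q) := [A_{n+m}(q)]_{n,m\geq 0}$ and to derive it by factoring $\mathcal{H}(q)$ into $q$-$\mathrm{TP}_2$ pieces, appealing to Lemmas~\ref{lm1}--\ref{lm}.

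I would first recast the recurrence (\ref{eq4}) in matrix form as $\mathcal{A}_n = T\mathcal{A}_{n-1}$, where $\mathcal{A}_n := (A_{n,0}, A_{n,1}, \ldots)^T$ and $T$ is the infinite pentadiagonal transfer matrix whose row $0$ equals $(\alpha, g, h, 0, \ldots)$, row $1$ equals $(\beta, f, g, h, 0, \ldots)$, and row $k\geq 2$ equals $(\ldots, 0, \gamma, e, f, g, h, 0, \ldots)$ with $\gamma$ at position $k-2$. Then $\mathcal{A}_n = T^n e_0$, and with $u(q) := (1, q, q^2, \ldots)^T$ one has $A_{n+m}(q) = u(q)^T T^{n+m} e_0$. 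Splitting $T^{n+m} = T^n T^m$ yields the factorization $\mathcal{H}(q) = \mathcal{U}(q)\,\mathcal{V}$, where
\[\mathcal{U}(q)_{n,l} := (u(q)^T T^n)_l, \qquad \mathcal{V}_{l,m} := (T^m e_0)_l = A_{m,l}.\]

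Next I would verify that $T$ is $\mathrm{TP}_2$ by inspecting its $2\times 2$ minors; by Lemma~\ref{lm1} it suffices to check the leading principal submatrices. The boundary minors (rows $0,1$ and rows $1,2$) recover $\alpha f\geq \beta g$, $\alpha g\geq \beta h$, $g^2\geq fh$, $\beta e\geq \gamma f$, $\beta f\geq \gamma g$, $\beta g\geq \gamma h$, $f^2\geq eg$, and $fg\geq eh$; the interior minors (rows $k,k+1$ for $k\geq 2$) additionally require $ef\geq \gamma g$, $eg\geq \gamma h$, and $e^2\geq \gamma f$ --- the last of which is not listed in (1)-(4) but follows from $e\geq \beta$ in (1) together with $\beta e\geq \gamma f$ in (4). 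Lemma~\ref{lm2} then yields that $T^n$ is $\mathrm{TP}_2$ for every $n\geq 0$, from which $\mathrm{TP}_2$ of $\mathcal{V}$ follows by a Cauchy--Binet-type expansion: for $l_1<l_2$ and $m_2=m_1+d$, writing $T^{m_2}=T^{m_1}T^d$ gives
\[\mathcal{V}_{l_1,m_1}\mathcal{V}_{l_2,m_2}-\mathcal{V}_{l_1,m_2}\mathcal{V}_{l_2,m_1}=\sum_{r\geq 0}(T^d)_{r,0}\det\!\begin{pmatrix}(T^{m_1})_{l_1,0} & (T^{m_1})_{l_1,r}\\(T^{m_1})_{l_2,0} & (T^{m_1})_{l_2,r}\end{pmatrix},\]
a sum of nonnegative terms.

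The main obstacle is establishing that $\mathcal{U}(q)$ is $q$-$\mathrm{TP}_2$: the $u(q)$-weighting blends rows of $T^n$ in a way that, when expanded, produces cancellations which cannot be controlled purely by the $\mathrm{TP}_2$-ness of $T^n$. Here the hypotheses beyond the $2\times 2$ minors of $T$ --- in particular $f\geq \alpha$ and $e\geq \beta$ in (1), and $\alpha e\geq \gamma g$ in (3) --- become indispensable. I would handle this step by induction on the row index, using the iteration $\mathcal{U}(q)_{n+1,\cdot}=\mathcal{U}(q)_{n,\cdot}\,T$: the base case $n=0$ amounts to checking $q^{k_1}(u(q)^T T)_{k_2}\geq_q q^{k_2}(u(q)^T T)_{k_1}$ for $k_1<k_2$, which a direct expansion reduces exactly to the four inequalities of (1); the inductive step propagates the property through the remaining hypotheses together with the $\mathrm{TP}_2$-ness of $T$ from the previous paragraph. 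Once $\mathcal{U}(q)$ is known $q$-$\mathrm{TP}_2$, Lemma~\ref{lm} yields $q$-$\mathrm{TP}_2$-ness of $\mathcal{H}(q)=\mathcal{U}(q)\,\mathcal{V}$, which is equivalent to strong $q$-log-convexity of $(A_n(q))_{n\geq 0}$.
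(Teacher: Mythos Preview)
Your Hankel factorization $\mathcal{H}(q)=\mathcal{U}(q)\,\mathcal{V}$ is a genuinely different route from the paper's. The paper never looks at $\mathcal{H}(q)$ directly; instead it introduces the partial–sum array $b_{n,k}(q)=\sum_{i\ge k}A_{n,i}q^{i}$, writes $\mathcal{B}=[b_{n,k}(q)]=\mathcal{A}\,\mathcal{T}$ with $\mathcal{T}=[q^{i}\mathbf{1}_{i\ge j}]$, and computes the strong $q$-log-convexity difference explicitly from the recurrence for $b_{n,0}$. That computation produces the coefficients $g,\ f-\alpha,\ e-\beta,\ h$ in front of $2\times 2$ minors of $\mathcal{B}$, so condition~(1) enters only as the nonnegativity of those four numbers, and $q$-TP$_2$ of $\mathcal{B}$ follows from TP$_2$ of $\mathcal{A}$ (your $\mathcal{V}^{T}$) times the trivially $q$-TP$_2$ matrix $\mathcal{T}$. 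In particular the paper never needs an analogue of your matrix $\mathcal{U}(q)$, which is exactly the piece that gives you trouble.

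And that piece is where your argument has a real gap. The induction you describe, via $R_{n+1}=R_nT$, gives $\bigl(\begin{smallmatrix}R_i\\ R_{n+1}\end{smallmatrix}\bigr)=\bigl(\begin{smallmatrix}R_{i-1}\\ R_n\end{smallmatrix}\bigr)T$ for $i\ge 1$, which with TP$_2$ of $T$ and the inductive hypothesis covers every row pair \emph{except} $(0,n+1)$; row $0=u(q)^{T}$ is not of the form $R_{-1}T$, so this last case is not reached by ``propagating through $T$'', and you have not said how to handle it. A repair is available: the $q^{j}$-coefficient of the rows-$(0,d)$ columns-$(l_1,l_2)$ minor of $\mathcal{U}(q)$ is $(T^{d})_{j-l_1,l_2}-(T^{d})_{j-l_2,l_1}$, and the diagonal monotonicity $(T^{d})_{b,l_2}\ge (T^{d})_{a,l_1}$ whenever $b-a=l_2-l_1>0$ follows by a separate induction on $d$ using only condition~(1) --- that is where $f\ge\alpha$ and $e\ge\beta$ are actually consumed. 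One smaller bookkeeping slip: your TP$_2$ check of $T$ uses only consecutive rows, so you miss, for instance, the rows-$(0,2)$ columns-$(0,1)$ minor $\alpha e-\gamma g$; this is why you list $\alpha e\ge\gamma g$ among the ``hypotheses beyond the $2\times 2$ minors of $T$'' when in fact it is one of them.
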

\begin{proof}
Define a triangular array $\mathcal{B}=[b_{n,k}(q)]_{n,k\geq 0}$ such that
$$b_{n,k}(q)=\sum_{i\geq k}A_{n,i}q^{i}\text{\ \ for all $n$ and $k$}.$$
Thus, by equation \eqref{eq4}, it is straightforward to verify that the triangular array $\mathcal{B}$ satisfies the recurrence given by
\begin{align}
  b_{n,k}(q)= & \gamma q^{2}b_{n-1,k-2}(q)+eqb_{n-1,k-1}(q)+fb_{n-1,k}(q) \notag \\
  &+\frac{g}{q}b_{n-1,k+1}(q)+\frac{h}{q^{2}}b_{n-1,k+2}(q)\text{\ \ \ for $n\geq 1$, $k\geq 2$,} \label{eq5}
  \end{align}
  \begin{align*}
  b_{n,0}(q)=  &(\alpha+\beta q+ \gamma q^{2})b_{n-1,0}(q)+\left[\frac{g}{q}+(f-\alpha)+(e-\beta)q\right]b_{n-1,1}(q) \notag\\
  &+\frac{h}{q^{2}}b_{n-1,2}(q),
\end{align*}

where $b_{0,0}=1$ and $b_{0,k}=0$ for $k>0$. From this recurrence relation, we can deduce that $A_{n}(q)=b_{n,0}(q)$.

\medskip

Thus, from Equation \ref{eq5}, it follows that
\begin{align*}
  &A_{n}(q)A_{m}(q)-A_{n+1}(q)A_{m-1}(q) \\
  &=b_{n,0}(q)b_{m,0}(q)-b_{n-1,0}(q)b_{m+1,0}(q)\\
  &=b_{n,0}(q)\left[ (\alpha+\beta q+ \gamma q^{2})b_{m-1,0}(q)+\left[\frac{g}{q}+(f-\alpha)+(e-\beta)q\right]b_{m-1,1}(q)\right.\\
  &\left.+\frac{h}{q^{2}}b_{m-1,2}(q)\right]\\
  &-b_{m-1,0}(q)\left[ (\alpha+\beta q+ \gamma q^{2})b_{n,0}(q)+\left[\frac{g}{q}+(f-\alpha)+(e-\beta)q\right]b_{n,1}(q)\right.\\
  &\left.+\frac{h}{q^{2}}b_{n,2}(q)\right]\\
  &=\left[g+(f-\alpha)q+(e-\beta)q^{2}\right]\left[b_{n,0}(q)b_{m-1,1}(q)-b_{m-1,0}(q)b_{n,1}(q)\right]/q\\
  &+h\left[b_{n,0}(q)b_{m-1,2}(q)-b_{m-1,0}(q)b_{n,2}(q)\right]/q^{2}
\end{align*}
for any $m\geq n$. To establish the strong $q$-log-convexity of the sequence of  $\left(A_{n}(q)\right)_{n\geq 0}$,  it suffices to demonstrate that the matrix $\mathcal{B}$ is $q$-TP$_{2}$, given that $f\geq \alpha$, $e\geq \beta$, $g\geq 0$, and $h\geq 0$.

\medskip

Consider the infinite lower triangular matrix
\begin{equation*}
\mathcal{A}=[A_{n,k}]_{k,n\geq 0}=\begin{bmatrix}
A_{0,0}\text{ \ \ } & 0 & 0 & 0 & 0 & \cdots\text{ \ } & 0   \\
A_{1,0}\text{ \ \ } & A_{1,1}\text{ \ } & A_{1,2}\text{ \ \ } & 0 &0 & \cdots\text{ \ } & 0  \\
A_{2,0}\text{ \ \ } & A_{2,1}\text{ \ } & A_{2,2}\text{ \ \ } & A_{2,3}\text{ \ } & A_{2,4}\text{ \ } & \cdots\text{ \ } & 0  \\
\vdots  & \vdots & \vdots & \vdots & \vdots & \ddots & \vdots
\end{bmatrix},
\end{equation*}
and an infinite lower triangular matrix $\mathcal{T}=[q^{i}]_{i\geq j\geq 0}$. It is clear that $\mathcal{B}=\mathcal{A}\mathcal{T}$ and that $\mathcal{T}$ is $q$-TP$_{2}$. This implies that $\mathcal{B}$ is also $q$-TP$_{2}$ by Lemma \ref{lm}, provided that $\mathcal{A}$ is $q$-TP$_{2}$. Therefore, we will now prove that $\mathcal{A}$ is $q$-TP$_{2}$.

\medskip

Define an infinite matrix
\begin{equation*}
\mathcal{J}=%
\begin{bmatrix}
\alpha\text{ \ } & \beta\text{ \ } & \gamma\text{ \ } & 0\text{\ } & 0\text{ \ } & \cdots\text{ \ } & 0 \\
g\text{ \ } & f\text{ \ } & e\text{ \ } & \gamma\text{\ } & 0\text{ \ } & \cdots\text{ \ } & 0 \\
h\text{ \ } & g\text{ \ } & f\text{ \ } & e\text{\ } & \gamma\text{ \ } & \cdots\text{ \ } & 0 \\
0\text{ \ } & h\text{ \ } & g\text{ \ } & f\text{ \ } & e\text{ \ } & \cdots\text{ \ } & 0  \\
0\text{ \ } & 0\text{ \ } & h\text{ \ } & g\text{ \ } & f\text{ \ } & \cdots\text{ \ } & 0  \\
\vdots\text{ \ }  & \vdots\text{ \ }  & \vdots\text{ \ }  & \vdots\text{ \ }  & \vdots  & \ddots\text{ \ } & \vdots
\end{bmatrix}.
\end{equation*}
Define the matrix $\overline{\mathcal{A}}$ as the matrix obtained from $\mathcal{A}$ by deleting its first row. Assume that the $n$th leading principal submatrices of $\mathcal{A}$, $\overline{\mathcal{A}}$, and $\mathcal{J}$ are denoted by $\mathcal{A}_{n}$, $\overline{\mathcal{A}}_{n}$, and $\mathcal{J}_{n}$, respectively.

\medskip

Thus, by \eqref{eq4} we have
\begin{equation*}
\overline{\mathcal{A}}_{n}=\mathcal{A}_{n}\mathcal{J}_{n}.
\end{equation*}
Note that $\mathcal{J}$ is TP$_2$ because the conditions (2)–(4) of Theorem \ref{ttm} are satisfied. Therefore, by induction on $n$, we conclude that 
$\mathcal{A}_n$ is TP$_2$ as per Lemma \ref{lm2}. Consequently, by applying Lemma \ref{lm1}, we find that $\mathcal{A}$ is also TP$_2$. This completes the proof.
\end{proof}
As applications of Theorem \ref{ttm} to triangular arrays, we obtain the following results.
\begin{proposition}
The generating functions of the rows in the triangles \href{https://oeis.org/A291082}{A291082} and \href{https://oeis.org/A291080}{A291080} form strongly $q$-log-convex sequences, respectively.
\end{proposition}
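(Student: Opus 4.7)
The plan is to apply Theorem \ref{ttm} directly to each of the two triangles. For this, I first need to identify the numerical constants $\alpha, \beta, \gamma, e, f, g, h$ so that the defining recurrences of A291082 and A291080, as listed in OEIS \cite{slo}, match the template of \eqref{eq4} together with the special boundary cases for $k=0$ and $k=1$. Both triangles are closely related to the bi$^s$nomial setting (they were already shown to have log-concave rows in Section \ref{Sec:2}), so their recurrences should fit the five-term form of \eqref{eq4} with small nonnegative integer coefficients.

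With those constants in hand, the verification of conditions (1)--(4) of Theorem \ref{ttm} reduces to a finite list of arithmetic inequalities between products of constants, which can be checked by direct computation. The same routine is then applied to A291080. Once both sets of conditions are confirmed, Theorem \ref{ttm} immediately yields that the corresponding row generating functions $A_n(q)=\sum_{k=0}^{2n} A_{n,k} q^{k}$ form strongly $q$-log-convex sequences, proving the proposition.

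The main obstacle, which is mostly bookkeeping rather than genuine mathematical difficulty, will be ensuring that the native boundary behavior of each triangle at $k=0$ and $k=1$ fits precisely the three- and four-term boundary forms required in the statement of Theorem \ref{ttm}, rather than some shifted or truncated variant. If a triangle's natural recurrence does not match exactly, I would either pad with zero coefficients so that the template is satisfied, or verify the small-$k$ instances of the $q$-TP$_2$ property of the matrix $\mathcal{A}$ by hand; either way, the only substantive step is the finite check of the inequalities in (1)--(4) for the specific numerical values of $\alpha, \beta, \gamma, e, f, g, h$ associated to A291082 and A291080.
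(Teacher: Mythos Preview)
Your approach is correct and matches the paper's own treatment: the paper simply states the proposition as a direct application of Theorem \ref{ttm} to these two triangles, without spelling out the constants or the inequality checks. Your plan---identify $\alpha,\beta,\gamma,e,f,g,h$ from the OEIS recurrences and verify conditions (1)--(4) by arithmetic---is exactly what is implicit in the paper, so there is nothing substantively different to compare.
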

By setting $\gamma=h=0$ and $e=1$ in Theorem \ref{ttm}, we obtain the following result due to Zhu \cite[Proposition 3.5]{XZ1}.
\begin{proposition}
The generating functions of the rows in the Catalan triangles of Aigner and Shapiro, the Motzkin triangle, and the large Schr$\ddot{o}$der triangle form strongly $q$-log-convex sequences.
\end{proposition}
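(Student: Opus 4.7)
The plan is a direct specialization of Theorem \ref{ttm}. Setting $\gamma = h = 0$ and $e = 1$ in the recurrence \eqref{eq4} collapses the interior recurrence to the three-term form
$$A_{n,k} = A_{n-1,k-1} + f\,A_{n-1,k} + g\,A_{n-1,k+1}, \qquad k \geq 2,$$
together with the boundary recurrences $A_{n,0} = \alpha A_{n-1,0} + g A_{n-1,1}$ and $A_{n,1} = \beta A_{n-1,0} + f A_{n-1,1} + g A_{n-1,2}$. Each of Aigner's and Shapiro's Catalan triangles, the Motzkin triangle, and the large Schr\"oder triangle fits this template for a specific nonnegative choice of $(\alpha,\beta,f,g)$, so the argument is conceptually a single specialization step followed by a handful of case checks.

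First I would push the substitution $\gamma = h = 0$, $e = 1$ through the four groups of hypotheses in Theorem \ref{ttm}. Almost every inequality mentioning $\gamma$ or $h$ collapses to the trivial $0 \geq 0$, and the surviving constraints reduce to a short list in the remaining parameters $(\alpha,\beta,f,g)$: essentially $f \geq \alpha$, $\beta \leq 1$, $g \geq 0$, $\alpha f \geq \beta g$, $f^{2} \geq g$, together with a few obvious nonnegativity items such as $\alpha g \geq 0$ and $fg \geq 0$. This reduction is mechanical, but recording it once lets each triangle be verified at a glance.

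Second, I would run the reduced list against the classical parameter values $(\alpha,\beta,f,g)$ for each of the four named triangles, read off from their standard recurrences (or the corresponding OEIS entries). Each check is an elementary arithmetic verification, and once it is done for every triangle Theorem \ref{ttm} immediately yields the strong $q$-log-convexity of the row generating functions $A_{n}(q)$, which is exactly the conclusion of the proposition.

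I do not expect any conceptual obstacle, since Theorem \ref{ttm} has already done all the structural work. The only care point is to make sure that the boundary behaviour at $k = 0$ and $k = 1$ of each triangle, as it appears in the standard literature, matches the boundary form required by the theorem with the chosen $\alpha,\beta$. If a reference uses a shifted indexing convention, a mild reindexing (or a direct check of the first two rows) reconciles the two, so the only potential friction is bookkeeping rather than mathematics.
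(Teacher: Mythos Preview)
Your proposal is correct and matches the paper's own approach exactly: the paper obtains this proposition simply by setting $\gamma=h=0$ and $e=1$ in Theorem~\ref{ttm}, noting that it then recovers Zhu's result \cite[Proposition~3.5]{XZ1}. Your write-up is in fact more detailed than the paper's, which does not spell out the reduced inequalities or the case-by-case parameter checks.
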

\section{Linear transformations preserving the strong $q$-log-convexity}\label{Sec:4}
Now, we will study some linear transformations that preserve the strong $q$-log-convexity and strong $q$-log-concavity properties.
\begin{lemma} \label{lm3}
Let $s$ be a positive integer. If the sequence $\left(x_{n}(q)\right)_{n}$ is strongly $q$-log-convex, then so is the sequence $\left(x_{k}(q)+x_{k+1}(q)+\cdots+ x_{k+s}(q)\right)_{k\geq 0}$.
\end{lemma}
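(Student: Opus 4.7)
Our target inequality is, for $m \ge n \ge 1$,
$$y_n(q)\,y_m(q) \le_q y_{n-1}(q)\,y_{m+1}(q),$$
where $y_k(q) = \sum_{i=0}^{s} x_{k+i}(q)$. My plan is to recast this as a $q$-TP$_2$ property of the Hankel matrix of $(y_k(q))$ and factor that matrix as a product of two $q$-TP$_2$ matrices, after which Lemma \ref{lm} finishes the proof.

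As a preliminary I would record the standard equivalence: a polynomial sequence $(f_n(q))_{n\ge 0}$ is strongly $q$-log-convex if and only if its infinite Hankel matrix $\mathcal{H}^f = [f_{i+j}(q)]_{i,j \ge 0}$ is $q$-TP$_2$. The $(\Rightarrow)$ direction follows by iterating the defining one-step inequality $f_c f_d \le_q f_{c-1} f_{d+1}$ (valid whenever $d \ge c \ge 1$) so as to reach any competitor $f_a f_b$ with $a + b = c + d$ and $a \le c \le d \le b$; the $(\Leftarrow)$ direction is immediate by taking the $2 \times 2$ minor with rows $(n-1, m)$ and columns $(0,1)$, which yields $f_{n-1} f_{m+1} - f_n f_m \ge_q 0$.

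Next I would introduce the $0/1$ band matrix $\mathcal{W} = [w_{k,j}]_{k,j \ge 0}$ defined by $w_{k,j} = 1$ precisely when $j \le k \le j + s$, and compute directly
$$(\mathcal{H}^x\,\mathcal{W})_{i,j} \;=\; \sum_{k\ge 0} x_{i+k}(q)\,w_{k,j} \;=\; \sum_{\ell=0}^{s} x_{i+j+\ell}(q) \;=\; y_{i+j}(q),$$
so that $\mathcal{H}^y = \mathcal{H}^x\,\mathcal{W}$. One then checks that $\mathcal{W}$ is TP$_2$: if a $2\times 2$ submatrix with rows $k_1 < k_2$ and columns $j_1 < j_2$ has both anti-diagonal entries equal to $1$, then the chain $j_1 < j_2 \le k_1 < k_2 \le j_1 + s$ forces the diagonal entries to equal $1$ as well, so every $2\times 2$ minor of $\mathcal{W}$ is $0$. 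Viewing its entries as constant polynomials, $\mathcal{W}$ is therefore $q$-TP$_2$.

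Finally, the hypothesis that $(x_n(q))$ is strongly $q$-log-convex makes $\mathcal{H}^x$ a $q$-TP$_2$ matrix, so by Lemma \ref{lm} the product $\mathcal{H}^y = \mathcal{H}^x\,\mathcal{W}$ is also $q$-TP$_2$, and the preliminary equivalence yields strong $q$-log-convexity of $(y_k(q))_{k \ge 0}$. The main technical obstacle I anticipate is handling the infinite matrix product cleanly; I would address this either by the leading-principal-submatrix reduction of Lemma \ref{lm1} (as in the proof of Theorem \ref{ttm}), or more directly by observing that each $2 \times 2$ minor of $\mathcal{H}^x\,\mathcal{W}$ is already a finite Cauchy--Binet sum of products of a minor of $\mathcal{H}^x$ and a minor of $\mathcal{W}$, each factor being nonnegative in the $\le_q$ sense.
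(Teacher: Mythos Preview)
Your proof is correct and takes a genuinely different route from the paper's. The paper argues by a direct expansion in the base case $s=1$---writing $[x_{k-1}+x_k][x_{l+1}+x_{l+2}]-[x_k+x_{k+1}][x_l+x_{l+1}]$ as a sum of three bracketed differences, each $\ge_q 0$ by hypothesis---and then invokes induction on $s$ (without spelling out the inductive step). You instead recognise that the Hankel matrix of $(y_k)$ factors as $\mathcal{H}^y=\mathcal{H}^x\,\mathcal{W}$ with $\mathcal{W}$ the $0/1$ band matrix, verify that $\mathcal{W}$ is TP$_2$, and apply Lemma~\ref{lm} to conclude. This handles all $s$ uniformly with no induction on $s$ and plugs directly into the total-positivity machinery the paper has already assembled; the paper's expansion is more elementary in that it needs neither the Hankel/strong-$q$-log-convexity equivalence you record as a preliminary nor Lemma~\ref{lm}. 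Your Cauchy--Binet remark disposes of the infinite-product issue cleanly, since each column of $\mathcal{W}$ has only $s+1$ nonzero entries and the relevant minor sums are finite.
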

\begin{proof}
Taking $s=1$, we have:
\begin{align*}
\left[x_{k-1}(q)+x_{k}(q)\right]\left[x_{l+1}(q)+x_{l+2}(q)\right]-\left[x_{k}(q)+x_{k+1}(q)\right]\left[x_{l}(q)+x_{l+1}(q)\right] &\\
=\left[x_{k-1}(q)x_{l+1}(q)-x_{k}(q)x_{l}(q)\right]+\left[x_{k}(q)x_{l+2}(q)-x_{k+1}(q)x_{l+1}(q)\right]&\\
+\left[x_{k-1}(q)x_{l+2}(q)-x_{k+1}(q)x_{l}(q)\right]\geq_{q} 0&
\end{align*}
for any $l\geq k\geq 0$, since $\left(x_{n}(q)\right)_{n}$ is strongly $q$-log-convex. Hence, by the induction hypothesis on $s$, we obtain the desired result.
\end{proof}

\begin{theorem}\label{tthm1}
The bi$^{s}$nomial transformation $B_{n}(q)=\sum_{k=0}^{sn}\binom{n}{k}_{s} f_{k}(q)$ preserves the strong $q$-log-convexity property.
\end{theorem}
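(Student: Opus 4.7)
The plan is to reduce the strong $q$-log-convexity of $(B_n(q))_{n\geq 0}$ to a $q$-TP$_2$ condition on an auxiliary doubly-indexed matrix, factor this matrix as a product of the bi$^s$nomial array and the Hankel matrix of $(f_k(q))_k$, and then invoke Lemma \ref{lm}. This mirrors the philosophy of the proof of Theorem \ref{ttm}, where a recurrence-driven factorization was used to transport $q$-TP$_2$.

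First I would introduce the shifted transforms
\[
B_n^{(j)}(q) := \sum_{k\geq 0}\binom{n}{k}_{s}\,f_{k+j}(q), \qquad n,j\geq 0,
\]
so that $B_n(q) = B_n^{(0)}(q)$. The longitudinal recurrence \eqref{eq} gives $B_n^{(j)} = \sum_{i=0}^{s} B_{n-1}^{(j+i)}$. Substituting this in both factors of $B_{n-1}(q)B_{m+1}(q) - B_n(q)B_m(q)$ (using it on $B_n$ and on $B_{m+1}$) cancels the $i=0$ contribution and yields
\[
B_{n-1}B_{m+1} - B_n B_m \;=\; \sum_{i=1}^{s}\bigl[B_{n-1}^{(0)}B_m^{(i)} - B_{n-1}^{(i)}B_m^{(0)}\bigr].
\]
Each summand is the $2\times 2$ minor, at rows $n-1<m$ and columns $0<i$, of the doubly-indexed matrix $\mathcal{M}(q):=\bigl[B_n^{(k)}(q)\bigr]_{n,k\geq 0}$. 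So it suffices to show $\mathcal{M}$ is $q$-TP$_2$.

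Next I would factor $\mathcal{M}$. A direct calculation using $B_n^{(k)}=\sum_\ell \binom{n}{\ell}_{s}f_{\ell+k}$ produces $\mathcal{M} = \mathcal{P}\,\mathcal{F}$, where $\mathcal{P}:=\bigl[\binom{n}{\ell}_{s}\bigr]_{n,\ell\geq 0}$ is the bi$^s$nomial array and $\mathcal{F}:=[f_{\ell+k}(q)]_{\ell,k\geq 0}$ is the Hankel-type matrix of $(f_k)$. The $2\times 2$ minors of $\mathcal{F}$ are exactly the strong $q$-log-convexity inequalities of $(f_k)$, so $\mathcal{F}$ is $q$-TP$_2$ by hypothesis. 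For $\mathcal{P}$ I would prove TP$_2$ by the Lindstr\"om--Gessel--Viennot lemma applied to lattice paths with step set $\{(1,0),(1,1),\ldots,(1,s)\}$: for $n_1<n_2$ and $k_1<k_2$, take sources $A_1=(0,0),\,A_2=(-(n_2-n_1),0)$ and sinks $B_1=(n_1,k_1),\,B_2=(n_1,k_2)$, so that $P(A_i,B_j)$ equals the relevant $\binom{\cdot}{\cdot}_{s}$ and the LGV determinant reproduces the desired $2\times 2$ minor; any path system realizing the transposition must share a vertex because the $y$-coordinate is non-decreasing along each path. Since a TP$_2$ real matrix is trivially $q$-TP$_2$, Lemma \ref{lm} then gives $\mathcal{M}=\mathcal{P}\mathcal{F}$ is $q$-TP$_2$, and combined with the reduction above this yields $B_{n-1}B_{m+1}-B_nB_m\geq_q 0$ for all $m\geq n\geq 1$.

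The main obstacle I expect is the TP$_2$ property of $\mathcal{P}$. The LGV setup is conceptually clean, but verifying that the transposition contributes zero requires a careful monotonicity argument: if one path starts strictly higher than another and they share no vertex, then non-decreasing heights force it to remain strictly higher throughout, contradicting $k_1<k_2$ at the common terminal abscissa $x=n_1$. An alternative to the LGV verification is to invoke the total positivity of bi$^s$nomial triangles established in the Ahmia--Belbachir framework \cite{AH, abel, AMH}.
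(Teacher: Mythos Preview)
Your route is genuinely different from the paper's. The paper does \emph{not} use the matrix--factorization/$q$-TP$_2$ machinery here at all: it proves the auxiliary Lemma~\ref{lm3} (if $(x_k(q))_k$ is strongly $q$-log-convex then so is $(\sum_{j=0}^{s}x_{k+j}(q))_k$), checks the cases $n\le 3$, $s=2$ by hand, and then reduces via the longitudinal recurrence to the transformation at level $n-1$ applied to the shifted-sum sequence, closing the induction. Your factorization $\mathcal{M}=\mathcal{P}\,\mathcal{F}$ together with Lemma~\ref{lm} is more structural and would immediately extend to any array $\mathcal{P}$ that is TP$_2$; the paper's argument is more elementary and avoids having to establish TP$_2$ of the bi$^{s}$nomial array, needing only the (easier) Lemma~\ref{lm3}.

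There is, however, a real gap in your LGV step when $s\ge 2$. Your monotonicity claim---``if one path starts strictly higher than another and they share no vertex, then non-decreasing heights force it to remain strictly higher throughout''---fails for steps $(1,j)$ with $j\le s$ and $s\ge 2$: at some abscissa $x$ one can have $y_2(x)=1>y_1(x)=0$, then take $y_1(x{+}1)=2$ (a $(1,2)$ step) and $y_2(x{+}1)=1$ (a $(1,0)$ step), so the paths swap order without sharing a vertex. Thus transposition systems need not intersect, and LGV alone does not give nonnegativity of the $2\times2$ minors of $\mathcal{P}$ for $s\ge2$. The references you cite (\cite{AH,abel,AMH}) establish log-concavity of the \emph{rows} of $\mathcal{P}$, not TP$_2$ of the matrix. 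A clean repair is available, though: from log-concavity of each row $\bigl(\binom{n}{k}_{s}\bigr)_k$ one gets, via $\binom{n+1}{k}_{s}=\sum_{j=0}^{s}\binom{n}{k-j}_{s}$, that every adjacent-row $2\times2$ minor of $\mathcal{P}$ is nonnegative; since the entries involved are positive whenever $\binom{n_1}{k_2}_{s}>0$ (and the minor is trivially $\ge 0$ otherwise), the ratio $\binom{n}{k_2}_{s}/\binom{n}{k_1}_{s}$ is nondecreasing in $n$, which upgrades adjacent-row TP$_2$ to full TP$_2$. With that fix your argument goes through.
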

\begin{proof}
Using Lemma \ref{lm3}, we will prove our result by induction on $n$ and $s$. If $0\leq n\leq 3$ and $s=2$, then we have
\begin{align*}
&B_{0}(q)=f_{0}(q),\\
&B_{1}(q)=f_{0}(q)+f_{1}(q)+f_{2}(q),\\
&B_{2}(q)=f_{0}(q)+2f_{1}(q)+3f_{2}(q)+2f_{3}(q)+f_{4}(q),\\
&B_{3}(q)=f_{0}(q)+3f_{1}(q)+6f_{2}(q)+7f_{3}(q)+6f_{4}(q)+3f_{5}(q)+f_{6}(q),
\end{align*}
since $B_{n}(q)=\sum_{k=0}^{sn}\binom{n}{k}_{s} f_{k}(q)$. Therefore, it follows from the strong $q$-log-convexity of $\left( f_{k}(q)\right)_{k\geq 0}$ that
\begin{align*}
&B_{0}(q)B_{2}(q)-B^{2}_{1}(q)=\left[f_{0}(q)f_{2}(q)-f^{2}_{1}(q)\right]+2\left[f_{0}(q)f_{3}(q)-f_{1}(q)f_{2}(q)\right]\\
&\  \ \ \ \ \ \ \ \ \ \ \ \ \ \ \ +\left[f_{0}(q)f_{4}(q)-f^{2}_{2}(q)\right]\geq_{q}0,
\end{align*}
\begin{align*}
&B_{0}(q)B_{3}(q)-B_{1}(q)B_{2}(q)=2\left[f_{0}(q)f_{2}(q)-f^{2}_{1}(q)\right]+5[f_{0}(q)f_{3}(q)\\
&-f_{1}(q)f_{2}(q)]+3\left[f_{0}(q)f_{4}(q)-f^{2}_{2}(q)\right]+2\left[f_{0}(q)f_{3}(q)-f_{1}(q)f_{3}(q)\right]\\
&+2\left[f_{0}(q)f_{5}(q)-f_{2}(q)f_{3}(q)\right]+\left[f_{0}(q)f_{5}(q)-f_{1}(q)f_{4}(q)\right]\\
&+\left[f_{0}(q)f_{6}(q)-f_{2}(q)f_{4}(q)\right]\geq_{q}0,
\end{align*}
\begin{align*}
&B_{1}(q)B_{3}(q)-B^{2}_{2}(q)=\left[f_{0}(q)f_{2}(q)-f^{2}_{1}(q)\right]+3\left[f_{0}(q)f_{3}(q)-f_{1}f(q)_{2}(q)\right]\\
&+3\left[f_{0}(q)f_{4}(q)-f^{2}_{2}(q)\right]+\left[f_{0}(q)f_{4}(q)-f_{1}(q)f_{3}(q)\right]\\
&+3\left[f_{0}(q)f_{5}(q)-f_{2}(q)f_{3}(q)\right]+\left[f_{0}(q)f_{6}(q)-f_{3}^{2}(q)\right]\\
&+2\left[f_{1}(q)f_{4}(q)-f_{2}(q)f_{3}(q)\right]+3\left[f_{1}(q)f_{5}(q)-f_{3}^{2}(q)\right]\\
&+\left[f_{1}(q)f_{6}(q)-f_{3}(q)f_{4}(q)\right]+3\left[f_{1}(q)f_{6}(q)-f_{2}(q)f_{5}(q)\right]\geq_{q}0,
\end{align*}
which implies that $B_{0}(q),B_{1}(q),B_{2}(q),B_{3}(q)$ is strongly $q$-log-convex. So we proceed to the inductive step ($n\geq 4$ and $s\geq 3$).

\medskip

Note that
\begin{align*}
&B_{n}(q)=\sum_{k=0}^{sn}\binom{n}{k}_{s} f_{k}(q)\\
&B_{n}(q)=\sum_{k=0}^{s(n-1)}\binom{n-1}{k}_{s}\sum_{j=0}^{s} f_{k+j}(q).
\end{align*}
Therefore, by the induction hypothesis and the strong $q$-log-convexity of the sequence
$\left(x_{k}(q)+x_{k+1}(q)+\cdots+ x_{k+s}(q)\right)_{k\geq 0}$, we conclude that the sequence $B_{0}(q),B_{1}(q),\ldots, B_{n}(q)$ is strongly $q$-log-convex.
\end{proof}

Like the proof of Theorem \ref{tthm1}, we also have the following result.
\begin{theorem}\label{tthm2}
The bi$^{s}$nomial transformation $B_{n}(q)=\sum_{k=0}^{sn}\binom{n}{k}_{s} f_{k}(q)$ preserves the strong $q$-log-concavity property.
\end{theorem}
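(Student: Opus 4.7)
The plan is to mirror the proof of Theorem \ref{tthm1} verbatim, reversing the direction of each inequality. The engine of that proof was Lemma \ref{lm3}, which asserts that the operation $(x_k)_k\mapsto(x_k+x_{k+1}+\cdots+x_{k+s})_k$ preserves strong $q$-log-convexity, so the first step is to establish the analog for strong $q$-log-concavity. Once this is in hand, I would reuse the identity $B_n(q)=\sum_{k=0}^{s(n-1)}\binom{n-1}{k}_s\bigl(f_k(q)+f_{k+1}(q)+\cdots+f_{k+s}(q)\bigr)$ coming from \eqref{eq}, together with induction on $n$, exactly as in the proof of Theorem \ref{tthm1}.

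For the log-concave analog of Lemma \ref{lm3} it suffices by induction on $s$ to handle $s=1$. Expanding $\bigl[x_k(q)+x_{k+1}(q)\bigr]\bigl[x_l(q)+x_{l+1}(q)\bigr]-\bigl[x_{k-1}(q)+x_k(q)\bigr]\bigl[x_{l+1}(q)+x_{l+2}(q)\bigr]$ for $l\geq k\geq 1$ collapses to the three differences
\begin{equation*}
\bigl(x_kx_l-x_{k-1}x_{l+1}\bigr)+\bigl(x_{k+1}x_{l+1}-x_kx_{l+2}\bigr)+\bigl(x_{k+1}x_l-x_{k-1}x_{l+2}\bigr).
\end{equation*}
The first two are $\geq_q 0$ immediately from strong $q$-log-concavity applied with $(n,m)=(k,l)$ and $(k+1,l+1)$. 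For the third, when $l\geq k+1$ I would chain two applications, $x_{k+1}x_l\geq_q x_kx_{l+1}\geq_q x_{k-1}x_{l+2}$; when $l=k$, strong $q$-log-concavity applied directly with $(n,m)=(k,k+1)$ gives $x_kx_{k+1}\geq_q x_{k-1}x_{k+2}$.

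For the inductive step on $n$, setting $y_k(q)=f_k(q)+f_{k+1}(q)+\cdots+f_{k+s}(q)$, the sequence $(y_k(q))_k$ is strongly $q$-log-concave by the analog lemma above, and the induction hypothesis applied to $(y_k(q))_k$ delivers the strong $q$-log-concavity of $(B_n(q))_n$. The base cases $n\leq 3$ are handled by the same identities displayed in the proof of Theorem \ref{tthm1}, read with opposite sign: each bracket $[f_af_b-f_{a-1}f_{b+1}]$ there becomes $[f_{a-1}f_{b+1}-f_af_b]$ here, and any bracket whose indices are more than one step apart, such as $[f_2^2-f_0f_4]$, can be telescoped into elementary strong $q$-log-concavity differences (e.g.\ $f_2^2-f_0f_4=(f_2^2-f_1f_3)+(f_1f_3-f_0f_4)$), each of which is $\geq_q 0$.

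The only genuinely delicate point is the boundary case $l=k$ in the third difference of the lemma, where the naive two-step chain $x_{k+1}x_l\geq_q x_kx_{l+1}\geq_q x_{k-1}x_{l+2}$ breaks because its first step would demand $l\geq k+1$. This is precisely where \emph{strong} $q$-log-concavity, as opposed to mere $q$-log-concavity, is indispensable, since only the strong form directly supplies $x_kx_{k+1}\geq_q x_{k-1}x_{k+2}$. Beyond this index check and the mild telescoping needed in the base cases, the proof is a mechanical sign reversal of the argument for Theorem \ref{tthm1}.
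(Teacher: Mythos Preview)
Your proposal is correct and follows exactly the approach the paper intends: the paper's entire proof of Theorem \ref{tthm2} reads ``Like the proof of Theorem \ref{tthm1}, we also have the following result,'' and you have faithfully carried out that sign-reversal, including the concave analog of Lemma \ref{lm3}, the identity $B_n(q)=\sum_{k}\binom{n-1}{k}_s y_k(q)$ from \eqref{eq}, and the induction on $n$. Your extra care with the boundary case $l=k$ in the lemma and the telescoping of wide brackets such as $f_2^2-f_0f_4$ in the base cases is more explicit than anything the paper writes down, but it is precisely what is needed to make the mirrored argument rigorous.
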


\section*{Acknowledgement(s)}
The authors would like to thank the referees for many valuable remarks andsuggestions to improve the original manuscript. This work was supported by DG-RSDT (Algeria), PRFU Project, No. C00L03UN180120220002.


\begin{thebibliography}{99}
\bibitem{abel} Ahmia, M., Belbachir, H.: \textit{Log-concavity and LC-positivity for generalized triangles}, J. Integer Seq. \textbf{23}, (2020) Article 20.5.3.
\bibitem{abel1} Ahmia, M., Belbachir, H.: \textit{$Q$-total positivity and strong $q$-log-convexity from some generalized triangular arrays}, Ramanujan J. \textbf{49}, 341-352 (2019).
\bibitem{AH} Ahmia, M., Belbachir, H.: \textit{Preserving log-convexity for generalized Pascal triangles}, Electron. J. Combin. \textbf{19}, no. 2, Paper 16, 6 pp (2012).
\bibitem{BA} Andrews, G. E., Baxter, J.: \textit{Lattice gas generalization of the hard hexagon model III q-trinomials coefficients}, J. Stat. Phys. \textbf{47}, 297-330 (1987).
\bibitem{AMH} Bazeniar,  A., Ahmia, M., Belbachir, H.: \textit{Connection between bi$^s$nomial coefficients with their analogs and symmetric functions}, Turk J Math \textbf{42}, 807-818 (2018).
\bibitem{bbm} Belbachir, H., Benmezai, A.: \textit{A $q$-analogue for bi$^{s}$nomial coefficients and generalized Fibonacci sequences}, C. R. Acad. Sci. Paris, Ser. I. \textbf{352}, 167-171 (2014).
\bibitem{bsb}Belbachir, H., Bouroubi, S., Khelladi, A.: \textit{Connection between ordinary multinomials, Fibonacci numbers, Bell polynomials and discrete uniform distribution}, Annales Mathematicae et Informaticae. \textbf{35}, 21-30 (2008).
\bibitem{br} Brenti, F.: \textit{Log-concave and unimodal sequence in algebra, combinatorics and geometry: an update}, Elec. Contemp. Math. \textbf{178}, 71-89 (1994).
\bibitem{br2} Brenti, F.: \textit{Combinatorics and total positivity}, J. Combin. Theory Ser. A. \textbf{71}, 175-218 (1995).
\bibitem{br3} Brenti, F.: \textit{The applications of total positivity to combinatorics, and conversely, Total positivity and its applications}, Math. Appl., vol. \textbf{359}, Jaca, 1994, Kluwer, Dordrecht, 451-473 (1996).
\bibitem{bro}Brondarenko, B. A.: \textit{Generalized Pascal triangles and pyramids, their fractals, graphs and applications}, The Fibonacci Association, Santa Clara , Translated from Russian by R. C. Bollinger (1993).
\bibitem{But} Butler, L. M.: \textit{The q-log concavity of $q$-binomial coeffcients}, J. Combin. Theory Ser. A. \textbf{54}, 54-63 (1990).
\bibitem{Bu} Butler, L. M., Flanigan, W. P.: \textit{A note on log-convexity of $q$-Catalan numbers}, Ann. Comb. \textbf{11}, 369-373 (2007).
\bibitem{Ch} Chen, W. Y. C., Tang, R. L., Wang, L. X. W., Yang, A. L. B.: \textit{The $q$-log-convexity of the Narayana polynomials of type B}, Adv. in Appl. Math. \textbf{44}(2), 85-110 (2010).
\bibitem{Cn} Chen, W. Y. C., Wang, L. X. W., Yang, A. L. B.: \textit{Schur positivity and the $q$-log-convexity of the Narayana polynomials}, J. Algebraic Combin. \textbf{32}(3), 303-338 (2010).
\bibitem{Chn}Chen, W. Y. C., Wang, L. X. W., Yang, A. L. B.: \textit{Recurrence relations for strongly $q$-log-convex polynomials}, Canad. Math. Bull. \textbf{54}, 217-229 (2011).
\bibitem{Chn1} Chen, W. Y. C., Liang, H., Wang, Y.: \textit{Total positivity of recursive matrices}, Linear Alg. Applic. \textbf{471}, 383-393 (2015).
\bibitem{Dvp}Davenport, H., P\'{o}lya, G.: \textit{, On the product of two power series}, Canadian J. Math. \textbf{1}, 1-5 (1949).
\bibitem{Do}Dou, D. Q. J., Ren, A. X. Y.: \textit{On the $q$-log-convexity conjecture of Sun}, arXiv:1308.2736.
\bibitem{Dd}Dou, D. Q. J., Ren, A. X. Y.: \textit{The $q$-log-convexity of Domb's polynomials}, arXiv:1308.2961.
\bibitem{fa} Fallat, S. M., Johanson, C. R.: \textit{Totally Nonnegative Matrices}, Princeton University Press, Princeton, (2011).
\bibitem{kar} Karlin, S.: \textit{Total Positivity}, Volume \textbf{1}, Stanford University Press, (1968).
\bibitem{Kur} Kurtz, D. C.: \textit{A note on concavity properties of triangular arrays of numbers}, J. Combin. Theory Ser. A. \textbf{13}, 135-139 (1972).
\bibitem{Ler} Leroux, P.: \textit{Reduced matrices and $q$-log-concavity properties of $q$-Stirling numbers}, J. Combin. Theory Ser. A. \textbf{54}, 64-84 (1990).
\bibitem{Lii} Lin, Z., Zeng, J.: \textit{Positivity properties of Jacobi-Stirling numbers and generalized Ramanujan polynomials}, Adv. in Appl. Math. \textbf{53}, 12-27 (2014).
\bibitem{LI}Liu,  L. L., Wang, Y.: \textit{On the log-convexity of combinatorial sequences}, Adv. in. Appl. Math. \textbf{39}, 453-476 (2007).
\bibitem{Pen} Penson, K. A., Solomon, A. I.: \textit{Coherent states from combinatorial sequences}, Quantum Theory and Symmetries (Krak\'{o}w, 2001), World Scientific Publishing, New Jersey, pp. 527-530 (2002).
\bibitem{pin} Pinkus, A.: \textit{Totally Positive Matrices}, Cambridge University Press, Cambridge (2010).
\bibitem{Sag} Sagan, B. E.: \textit{Inductive proofs of $q$-log concavity}, Discrete Math. \textbf{99}, 289-306 (1992).
\bibitem{Sg} Sagan, B. E.: \textit{Log concave sequences of symmetric functions and analogs of the Jacobi-Trudi determinants}, Trans. Amer. Math. Soc. \textbf{329}, 795-811 (1992).
\bibitem {slo}Sloane, N.: \textit{The Online Encyclopedia of Integer Sequences}. Available online at \url{https://oeis.org/}.
\bibitem{St}Stanley, R. P.: \textit{Log-concave and unimodal sequences in algebra, combinatorics, and geometry}, Ann. New York Acad. Sci. \textbf{576}, 500-534 (1989).
\bibitem{WSu} Su, X. T., Wang, Y., Yeh, Y. N.: \textit{Unimodality Problems of Multinomial Coefficients and Symmetric Functions}, Electron. J. Combin.
\textbf{18}(1), Research Paper 73 (2011).
\bibitem{YW} Wang, Y., Yeh, Y.-N.: \textit{Log-concavity and LC-positivity}, J. Combin. Theory Ser. A. \textbf{114}, 195-210 (2007).
\bibitem{Yii} Wang,  Y., Yeh, Y.-N.: \textit{Polynomials with real zeros and P\'{o}lya frequency sequences}, J. Combin. Theory Ser. A. \textbf{109}, 63-74 (2005).
\bibitem{XZ} Zhu, B.-X.: \textit{Log-convexity and strong $q$-log-convexity for some triangular arrays,} Adv. in. Appl. Math. \textbf{50}(4), 595-606 (2013).
\bibitem{XZZ} Zhu, B.-X.: \textit{Some positivities in certain triangular array,} Proc. Amer. Math. Soc. \textbf{142}(9), 2943-2952 (2014).
\bibitem{XZ1} Zhu, B.-X.: \textit{Log-concavity and strong $q$-log-convexity for Riordan arrays and some recursive matrices}, Proceedings of the Royal Society of Edinburgh, 1-14 (2016).
\end{thebibliography}
\end{document}